\newtheorem{Theorem}{Theorem}[section]
\newtheorem{Lemma}{Lemma}[section]
\newtheorem{Proposition}{Proposition}[section]
\newtheorem{Corollary}{Corollary}[section]
\numberwithin{equation}{section}
\def\bc{\begin{center}}
\def\ec{\end{center}}
\def\a1{(a_1, a_2, \cdots, a_n)}
\begin{document}

\title[Stable sets and mean Li-Yorke chaos in positive entropy actions]
{Stable sets and mean Li-Yorke chaos in positive entropy
actions of bi-orderable amenable groups}

\author[W. Huang]{Wen Huang}
\address{W. Huang: Wu Wen-Tsun Key Laboratory of Mathematics, USTC, Chinese Academy of Sciences, Department of Mathematics, University of Science and Technology of China, Hefei, Anhui, 230026, P.R.China}
\email{wenh@mail.ustc.edu.cn}
\author[L. Jin]{Lei Jin}
\address{L. Jin: Department of Mathematics, University of Science and Technology of China, Hefei, Anhui, 230026, P.R.China}
\email{jinleim@mail.ustc.edu.cn}

\thanks {Authors are supported by NNSF of China 11225105, 11371339 and 11431012.}

\subjclass[2000]{Primary  37B05; Secondary 37A35}

\keywords{Stable set, mean Li-Yorke chaos, entropy}

\begin{abstract} It is proved that positive entropy implies mean Li-Yorke chaos for a $G$-system,
where $G$ is a countable, infinite, discrete, bi-orderable amenable group. Examples are given for the
cases of integer lattice groups and groups of integer unipotent upper triangular matrices.
\end{abstract}

\maketitle

\section{Introduction}
Given a $G$-system $(X,G)$, the chaotic behavior of $(X,G)$ reflects the complexity of the system,
which is one of the central topics in the study of dynamical systems.
The most popular notions used to describe the chaotic behaviors are positive entropy, weak mixing,
Devaney's chaos, Li-Yorke chaos and distributional chaos. Thus, the relationship among these notions
attracts a lot of attention.

For $\mathbb{Z}$-actions, it was shown by Huang and Ye in \cite{AAHY} that Devaney's chaos implies Li-Yorke chaos, and it was
proved by Blanchard, Glasner, Kolyada and Maass (see \cite{BGKM}) that positive entropy also implies Li-Yorke chaos.
It is clear that weak mixing and distributional chaos imply Li-Yorke chaos too. Distributional chaos can be divided further
to DC1, DC2 and DC3. It is easy to see that positive entropy does not imply DC1. By a nice observation, Downarowicz showed in \cite{dnrwc}
that DC2 is equivalent to the so-called mean Li-Yorke chaos, and proved that positive entropy implies DC2. Using
more standard argument, Huang, Li and Ye \cite{HLY} gave another proof of the later result and obtained the relationship between
asymptotic, entropy and mean Li-Yorke tuples.

Now a natural question is whether the above results hold for general group actions?
Kerr and Li first showed for amenable groups (see \cite{KL1}), then for sofic groups (see \cite{KL2}) that positive entropy indeed implies
Li-Yorke chaos. In \cite{AAHXY}, the authors considered asymptotic pairs, stable sets and chaos in positive
entropy $G$-systems for certain countable discrete infinite left-orderable amenable groups $G$.

Motivated by the above ideas and results, we naturally ask and study whether
positive entropy also implies mean Li-Yorke chaos for a general $G$-system.
In this paper, we investigate stable sets and mean Li-Yorke chaos for $G$-systems with positive entropy,
and prove in Theorem \ref{thm1} that positive entropy indeed implies mean Li-Yorke chaos for general actions
of countable discrete infinite bi-orderable amenable groups.
To state our main result of this paper, we introduce some notions in the following.

Let $G$ be a group with the unit $e_G$.
$G$ is said to be {\it left-orderable} if there exists a strict total ordering $<$ on its elements which is left invariant,
that is, $g_1<g_2$ implies $g_0g_1<g_0g_2$ for all $g_0,g_1,g_2\in G$.
If $<$ is also invariant under right-multiplication, then we say that $G$ is {\it bi-orderable}.
It is well known that a group $G$ is left-orderable if and only if it is right-orderable if and only if it contains a subset
$\Phi$, called an {\it algebraic past} of $G$, such that
$\Phi\cdot \Phi\subset\Phi$, $\Phi\cap\Phi^{-1}=\emptyset$, and $\Phi\cup \Phi^{-1} \cup\{e_G\}=G$.
Moreover, $G$ is bi-orderable if and only if there exists such $\Phi$ which also satisfies $g\Phi g^{-1}\subset\Phi$ for all $g\in G$.
Indeed, when $(G,<)$ is bi-orderable, one can take $\Phi =\{g\in G: g<e_G\}$.
With respect to the algebraic past $\Phi$, we obtain the desired linear ordering on $G$ as follows: $g_1$ is
less than $g_2$ (write $g_1<_{\Phi} g_2$), if $g_2^{-1}g_1\in\Phi$, and also $g_1g_2^{-1}\in\Phi$.
The simplest bi-orderable groups are the torsion-free, abelian ones.
Torsion-free, nilpotent groups and non-abelian free groups are also bi-orderable.
A nontrivial left-orderable group must be torsion-free.
The reader can refer to, e.g. \cite{BR,KM,GOD},
for more details of the theory of orderable groups.

Next we introduce the following version of mean Li-Yorke chaos for a $G$-system.
To state this definition, we first let $G$ be a countable discrete infinite amenable group,
and $\mathcal{F}=\{F_n\}_{n=1}^\infty$ be a F{\o}lner sequence of $G$.
Let $(X,G)$ be a {\it $G$-system}, that is, $X$ is a compact metric space and the group $G$ continuously acts on $X$.
Denote by $\rho$ the metric on $X$. Given $\eta>0$,
a pair $(x,y)\in X\times X$ is called a
{\it mean Li-Yorke pair with modulus $\eta$} for $\mathcal{F}$, if
\begin{align*}
\liminf_{n\rightarrow\infty} \frac{1}{|F_n|} \sum_{g\in F_n} \rho(gx,gy)=0 \text{ and }\limsup_{n\rightarrow\infty}\frac{1}{|F_n|} \sum_{g\in F_n} \rho(gx,gy)\ge \eta.
\end{align*}
For such $\mathcal{F}$, denote by $MLY_\eta (X,G)$ the set of all mean Li-Yorke pairs with modulus $\eta$ in $(X,G)$.
A subset $K$ of $X$ is called a {\it mean Li-Yorke set (with modulus $\eta$)}
if every $(x,y)\in K\times K$ with $x\neq y$ is a mean Li-Yorke pair (with modulus $\eta$), i.e.,
$K\times K\setminus\Delta_X\subset MLY_\eta (X,G)$,
where $\Delta_X=\{(x,x): x\in X\}$.

For a given $G$-system $(X,G)$, let $S$ be an infinite subset of $G$.
In \cite{AAHXY}, the authors introduced $S$-asymptotic pairs and $S$-stable sets for the $G$-system as follows.
 A pair $(x,y)\in X\times X$ is said to be an {\it $S$-asymptotic pair} if for each $\epsilon>0$, there are only finitely many elements
$s\in S$ with $\rho(sx,sy)>\epsilon$. Denote by $Asy_S(X,G)$ the set of all $S$-asymptotic pairs of $(X,G)$.
For a point $x\in X$, the set $W_S(x,G)=\{y\in X: (x,y)\in Asy_S(X,G)\}$
 is called the {\it $S$-stable set} of $x$.

\medskip

We now begin to state our main theorem.
For convenience, we say that $(G, \Phi, S, \mathcal{F})$ is a {\it cba-group} if
$G$ is a countable discrete infinite bi-orderable amenable group with the algebraic past $\Phi$,
$S$ is an infinite subset of $G$ such that $\sharp\{s\in S:s<_\Phi g\}<\infty$ for each $g \in G$,
and $\mathcal{F}=\{F_n\}_{n=1}^\infty$ is a F{\o}lner sequence of $G$ with
$e_G\in F_1\subset F_2\subset\cdots\subset\bigcup\limits_{n=1}^\infty F_n\subset S$.

A natural question is whether there always exist $S$ and $\mathcal{F}$
such that $(G, \Phi, S, \mathcal{F})$ is a cba-group
for given $G$ which is a countable discrete infinite bi-orderable amenable group with the algebraic past $\Phi$.
As the answer, we have the following result which will be proved in Section 2.

\begin{Proposition}\label{fexist}
Let $G$ be a countable discrete infinite bi-orderable amenable group with the algebraic past $\Phi$,
then there exist  an infinite subset $S$ of $G$ and  a F{\o}lner sequence $\mathcal{F}$ of $G$ such that $(G, \Phi, S, \mathcal{F})$ is a cba-group.
\end{Proposition}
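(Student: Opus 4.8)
The plan is to begin with an arbitrary F{\o}lner sequence of $G$ (available by amenability) and surgically modify it so that the sets become nested, contain $e_G$, and -- the essential point -- ``drift to $+\infty$'' with respect to $<_\Phi$. Fix an enumeration $G=\{g_1,g_2,\dots\}$ with $g_1=e_G$, and for $n\ge 1$ put $K_n=\{g_1,\dots,g_n\}$ and $M_n=\max_\Phi\{g_1,\dots,g_n\}$, the $<_\Phi$-largest element of this finite set. Let $\{E_m\}_{m=1}^\infty$ be a F{\o}lner sequence of $G$; since $G$ is infinite we may use the standard fact that $|E_m|\to\infty$. Set $F_0=\{e_G\}$ and construct $F_n$ recursively: given the finite set $F_{n-1}$, first choose $m=m(n)$ so large that $|gE_m\,\triangle\,E_m|\le\tfrac1n|E_m|$ for every $g\in K_n$ and $|E_m|\ge n|F_{n-1}|$; then, using that $<_\Phi$ is invariant under multiplication, choose $t_n\in G$ with $ft_n>_\Phi M_n$ for every $f\in E_{m(n)}$ -- this just requires $t_n$ to lie above $\max_\Phi\{f^{-1}M_n:f\in E_{m(n)}\}$ -- and finally set
\[
F_n=\bigl(E_{m(n)}\,t_n\bigr)\cup F_{n-1}.
\]

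I would then verify the defining properties of a cba-group one at a time. That $e_G\in F_1$ and $F_1\subseteq F_2\subseteq\cdots$ is immediate from $F_n\supseteq F_{n-1}\supseteq F_0$. For the F{\o}lner property, recall that right translation leaves both the cardinality and the F{\o}lner invariance of $E_{m(n)}$ unchanged, while adjoining the comparatively tiny set $F_{n-1}$ changes symmetric differences by at most $2|F_{n-1}|$; hence for $g\in K_n$ one obtains $|gF_n\,\triangle\,F_n|\le\tfrac1n|E_{m(n)}|+2|F_{n-1}|\le\tfrac3n|E_{m(n)}|\le\tfrac3n|F_n|$, and since every $g\in G$ lies in all but finitely many $K_n$, $\{F_n\}_{n\ge1}$ is a F{\o}lner sequence. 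Putting $S=\bigcup_nF_n$, we have $\bigcup_nF_n\subseteq S$ trivially, and $S$ is infinite because $|F_n|\ge|E_{m(n)}|\to\infty$. Finally, the choice of $t_n$ guarantees that every element of the new block $F_n\setminus F_{n-1}\subseteq E_{m(n)}t_n$ has the form $ft_n>_\Phi M_n\ge_\Phi g_k$ whenever $k\le n$; thus for fixed $k$, once $n\ge k$ the block $F_n\setminus F_{n-1}$ contains no element $<_\Phi g_k$, and writing $S=F_0\cup\bigcup_{n\ge1}(F_n\setminus F_{n-1})$ we get $\{s\in S:s<_\Phi g_k\}\subseteq F_0\cup\bigcup_{1\le n<k}(F_n\setminus F_{n-1})=F_{k-1}$, which is finite. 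Hence $(G,\Phi,S,\mathcal{F})$ with $\mathcal{F}=\{F_n\}_{n\ge1}$ is a cba-group, as required.

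The one genuinely delicate feature is that the three demands pull in different directions: the F{\o}lner property wants the sets large and ``balanced'', nestedness forces every chosen element to be retained forever, and the finiteness condition on $\{s\in S:s<_\Phi g\}$ forces each newly added block to sit high up in the ordering. The first two are reconciled by the routine device of absorbing a fixed finite set into a far larger F{\o}lner set (this is what the cardinality requirement $|E_{m(n)}|\ge n|F_{n-1}|$ is for), and the third is handled by translating the new F{\o}lner block far enough in the positive direction, which is precisely where the multiplication-invariance of $<_\Phi$ enters. Everything else is bookkeeping, so beyond setting up this induction cleanly I do not anticipate a real obstacle.
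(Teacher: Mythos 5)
Your proof is correct and follows essentially the same strategy as the paper's: right-translate F{\o}lner sets far enough in the positive direction of $<_\Phi$ (using bi-invariance of the order), then take nested unions of rapidly growing blocks so that the F{\o}lner property survives and each tail block avoids $\{s: s<_\Phi g\}$. The only cosmetic difference is that you interleave the translation and the nesting in one recursion, whereas the paper first builds the translated F{\o}lner sequence $\{H_n\}$ and then passes to a sparse subsequence.
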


\medskip

Our main result of this paper is the following

\begin{Theorem}\label{thm1} Let $(G, \Phi, S, \mathcal{F})$ be a cba-group.
If $(X,G)$ is a $G$-system and $\mu$ is an ergodic $G$-invariant Borel probability measure on $X$ with positive entropy,
then there exists a positive number $\eta$ satisfying that for $\mu$-a.e. $x\in X$: there exists a Cantor set
$K_x\subset\overline{W_S(x,G)}$ such that $K_x$ is a mean Li-Yorke set with modulus $\eta$ for $\mathcal{F}$.
\end{Theorem}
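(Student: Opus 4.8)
My plan is to combine the entropy theory of amenable group actions with a Kuratowski--Mycielski type inductive construction carried out inside the closures of the $S$-stable sets. \emph{Step 1 (entropy input; fixing $\eta$).} Since $\mu$ has positive entropy there is a two-element Borel partition $\alpha=\{A_0,A_1\}$ with $h_\mu(\alpha\,|\,\mathcal{P}_\mu)=h_\mu(\alpha)>0$, where $\mathcal{P}_\mu$ is the Pinsker $\sigma$-algebra. Let $\pi\colon X\to Y$ be the Pinsker factor and $\mu=\int_Y\mu_y\,d\nu(y)$ the disintegration; positive entropy forces the relative product $\lambda=\int_Y\mu_y\times\mu_y\,d\nu(y)$ to give positive mass to $X\times X\setminus\Delta_X$. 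Using the theory of $\mu$-entropy pairs / IE-pairs for amenable groups (\cite{KL1}) one obtains a closed, $G$-invariant set of entropy pairs relative to $\mathcal{P}_\mu$, a uniform scale $\eta>0$ at which ``independence'' of the relevant neighbourhoods occurs, and --- the point that makes the \emph{mean} statement possible --- that such independence sets have positive lower density along $\mathcal{F}$. This $\eta$ is the modulus in the theorem.

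\emph{Step 2 (from entropy to branching inside $W_S(x,G)$; this is where bi-orderability is used, and where I expect the main difficulty).} Because $\Phi$ is a \emph{two-sided} algebraic past, the $\sigma$-algebra $\alpha_\Phi$ generated by $\{g\alpha:g\in\Phi\}$ behaves well under translation, and one gets a one-sided relative entropy identity with $h_\mu(\alpha\,|\,\alpha_\Phi\vee\mathcal{P}_\mu)>0$. Feeding this into the pointwise ergodic theorem and a relative Shannon--McMillan--Breiman argument along $\mathcal{F}$, together with the defining property $\sharp\{s\in S:s<_\Phi g\}<\infty$ of $S$ (so that agreeing with $x$ far enough out along $S$ forces $S$-asymptoticity), I would prove: there is a $G$-invariant, $\mu$-conull Borel set $X_0\subset X$ so that for every $x\in X_0$ and every $N\geq 1$ there are an integer $m_N$, which may be taken arbitrarily large, and a family $\{z^{(N)}_w:w\in\{0,1\}^N\}\subset W_S(x,G)$, compatible with the level-$(N-1)$ family (each $z^{(N)}_w$ agrees closely, on a long orbit segment, with $z^{(N-1)}_{w'}$ where $w'$ is $w$ with its last symbol deleted), such that $\tfrac{1}{|F_{m_N}|}\sum_{g\in F_{m_N}}\rho(gz^{(N)}_w,gz^{(N)}_{w'})\geq\eta$ for all $w\neq w'$ in $\{0,1\}^N$. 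The \emph{density} of the independence sets from Step 1 is exactly what promotes sup-separation of the branches to this mean separation; with only left-orderability one obtains the weaker asymptotic / Li-Yorke version of \cite{AAHXY}. The obstacle here is to achieve all of this at once: genuine positive relative entropy must supply the branching, the branches must lie in the honest $S$-stable set, the separation must occupy a positive-density time set, and the whole assignment must be Borel in $x$.

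\emph{Step 3 (Cantor scheme and conclusion).} Fix $x\in X_0$ and build a Cantor scheme $\{y_w:w\in\bigcup_N\{0,1\}^N\}\subset W_S(x,G)$ together with integers $m_1<m_2<\cdots$, scales $M_1<M_2<\cdots$ and reals $\varepsilon_n\downarrow 0$ with $\sum_n\varepsilon_n<\infty$, such that at level $n$: the family $\{y_w:|w|=n\}$ is the family $\{z^{(n)}_w\}$ of Step 2 for the window $F_{m_n}$; one has $\rho(gy_w,gy_{w'})<\varepsilon_n$ for all $g\in F_{M_n}$ whenever $|w|=n+1$ and $w'=w$ with the last symbol deleted; and, having already fixed the level-$n$ points, $M_n\geq m_n$ is chosen so large that $\tfrac{1}{|F_{M_n}|}\sum_{g\in F_{M_n}}\rho(gy_w,gx)<\varepsilon_n$ for every $|w|=n$ --- possible since each $y_w\in W_S(x,G)$. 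Then $y_\omega:=\lim_n y_{\omega|n}$ exists for every $\omega\in\{0,1\}^{\mathbb{N}}$, the map $\omega\mapsto y_\omega$ is a continuous injection, and $K_x:=\{y_\omega:\omega\in\{0,1\}^{\mathbb{N}}\}$ is a Cantor subset of $\overline{W_S(x,G)}$ (each $y_\omega$ is a limit of points of $W_S(x,G)$). For $\omega\neq\omega'$ with longest common prefix of length $k$ and any $n>k$, the level-$n$ separation together with the $\varepsilon$-nesting (and $F_{m_n}\subset F_{M_n}\subset F_{M_j}$ for $j\geq n$) gives $\tfrac{1}{|F_{m_n}|}\sum_{g\in F_{m_n}}\rho(gy_\omega,gy_{\omega'})\geq\eta-2\sum_{j\geq n}\varepsilon_j$, so since $m_n\to\infty$, $\limsup_n\tfrac{1}{|F_n|}\sum_{g\in F_n}\rho(gy_\omega,gy_{\omega'})\geq\eta$; while the $M_n$-condition gives $\tfrac{1}{|F_{M_n}|}\sum_{g\in F_{M_n}}\rho(gy_\omega,gx)\to 0$ and likewise for $\omega'$, hence by the triangle inequality $\tfrac{1}{|F_{M_n}|}\sum_{g\in F_{M_n}}\rho(gy_\omega,gy_{\omega'})\to 0$ and $\liminf_n\tfrac{1}{|F_n|}\sum_{g\in F_n}\rho(gy_\omega,gy_{\omega'})=0$. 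Thus $K_x\times K_x\setminus\Delta_X\subset MLY_\eta(X,G)$. Carrying out Steps 2--3 measurably in $x$ (measurable selection at each level) and invoking ergodicity of $\mu$ to make $X_0$ conull finishes the proof.
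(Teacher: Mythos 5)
Your Step 3 is a sound Cantor-scheme argument, and your use of $S$-asymptoticity together with $\bigcup_n F_n\subset S$ and $|F_n|\to\infty$ to kill the liminf is exactly the right observation (the paper uses the same fact in the form $Asy_S(X,G)\subset P(X,G)$). But the proposal has a genuine gap at Step 2, which you yourself flag as ``where I expect the main difficulty'': essentially all of the content of the theorem is asserted there rather than proved. You need, for $\mu$-a.e.\ $x$, a full binary tree of points lying \emph{inside the honest stable set} $W_S(x,G)$ with pairwise \emph{mean} separation $\geq\eta$ on F\o lner windows, and neither of your proposed inputs delivers this. IE-pairs with positive-density independence sets (Kerr--Li) produce orbit itineraries realizing all patterns over prescribed neighbourhoods on a positive-density set of times; they do not place the witnessing points in $W_S(x,G)$, and converting ``positive density of independence times'' into a lower bound on $\frac{1}{|F_n|}\sum_{g\in F_n}\rho(gx,gy)$ for specific pairs still requires an ergodic-theorem argument you have not supplied. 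Likewise, a relative Shannon--McMillan--Breiman statement controls measures of partition cells along F\o lner sets; it says nothing about the geometry of $S$-stable sets. As written, the branching families $\{z^{(N)}_w\}$ are postulated, not constructed.

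The paper closes exactly this gap by a measure-theoretic mechanism your outline never touches. Using the Pinsker formula for bi-orderable amenable groups (this is where bi-orderability, i.e.\ $g\Phi g^{-1}\subset\Phi$, is actually used), one builds an ``excellent'' measurable partition $\mathcal{P}$ with two properties: $\overline{\big(g\mathcal{P}_\Phi\big)(x)}\subset W_S(x,G)$ for all $x$ and $g$, and $\bigcap_{n}h_n\big(\widehat{\mathcal{P}_\Phi}\vee P_\mu(G)\big)=P_\mu(G)$ for a sequence $h_n\searrow\infty$ in $\Phi$. Martingale convergence of the disintegrations over $\widehat{h_n\mathcal{P}_\Phi}\vee P_\mu(G)$ then shows that $W_S(x,G)\cap\mathrm{supp}(\mu_x)$ is dense in $\mathrm{supp}(\mu_x)$, where $\mu=\int\mu_x\,d\mu(x)$ is the disintegration over the Pinsker algebra; positive entropy makes $\mu_x$ non-atomic, so $\mathrm{supp}(\mu_x)$ is perfect. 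The limsup separation comes for free from Lindenstrauss's pointwise ergodic theorem along a tempered F\o lner subsequence applied to $\lambda=\mu\times_{P_\mu(G)}\mu$ and the continuous function $\rho$ (with $\eta=\tau\lambda(W)$, $W=\{\rho>\tau\}$), since $(\mu_x\times\mu_x)$-a.e.\ pair is generic for $\lambda$; Mycielski's lemma then replaces your hand-built Cantor scheme. If you want to retain your constructive Steps 1--3, you would still have to prove statements equivalent to the two excellent-partition claims in order to get your branching points into $W_S(x,G)$; without that, the proposal does not constitute a proof.
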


\medskip

Note that the order given in the cba-group is important for us to construct the excellent partition in the proof of this main theorem.
Moreover, by the variational principle of entropy for $G$-systems (see e.g., \cite{OP,ST}), we obtain the following result from Theorem \ref{thm1}.

\begin{Corollary}
Suppose $G$ is a countable discrete infinite bi-orderable amenable group with the algebraic past $\Phi$,
 $\mathcal{F}=\{F_n\}_{n=1}^\infty$ is a F{\o}lner sequence of $G$ with $e_G\in F_1\subset F_2\subset\cdots$,
and $\sharp\{s\in \bigcup\limits_{n=1}^\infty F_n:s<_\Phi g\}<\infty$ for each $g \in G$.
If $(X,G)$ is a $G$-system with positive topological entropy,
then there exist $\eta>0$ and a Cantor set $K\subset X$ such that $K$ is a mean Li-Yorke set with modulus $\eta$ for $\mathcal{F}$.
\end{Corollary}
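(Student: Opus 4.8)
The plan is to deduce the Corollary from Theorem \ref{thm1} by passing from topological entropy to measure-theoretic entropy via the variational principle. First I would invoke Proposition \ref{fexist}: since $G$ is a countable discrete infinite bi-orderable amenable group with algebraic past $\Phi$, there exist an infinite set $S$ and a F\o lner sequence $\mathcal{G}=\{G_n\}$ so that $(G,\Phi,S,\mathcal{G})$ is a cba-group. However, the Corollary fixes its own F\o lner sequence $\mathcal{F}=\{F_n\}$ with the nesting and past-finiteness conditions, and we must produce a mean Li-Yorke set for \emph{that} $\mathcal{F}$. The key observation is that the hypotheses on $\mathcal{F}$ in the Corollary are exactly what is needed to make $(G,\Phi,S,\mathcal{F})$ a cba-group once we take $S=\bigcup_{n=1}^\infty F_n$: indeed $e_G\in F_1\subset F_2\subset\cdots$ and $\sharp\{s\in S: s<_\Phi g\}<\infty$ for each $g$, and trivially $\bigcup_n F_n\subset S$. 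So I would simply set $S:=\bigcup_{n=1}^\infty F_n$; then $(G,\Phi,S,\mathcal{F})$ is a cba-group directly, and Proposition \ref{fexist} is not even needed.

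Next, since $(X,G)$ has positive topological entropy, the variational principle for amenable group actions (\cite{OP,ST}) gives an ergodic $G$-invariant Borel probability measure $\mu$ on $X$ with $h_\mu(X,G)>0$. (If the variational principle only yields an invariant measure of positive entropy, pass to its ergodic decomposition, which must contain ergodic components of positive entropy on a set of positive measure.) Applying Theorem \ref{thm1} to the cba-group $(G,\Phi,S,\mathcal{F})$, this $\mu$, there is $\eta>0$ such that for $\mu$-a.e.\ $x\in X$ there is a Cantor set $K_x\subset\overline{W_S(x,G)}\subset X$ which is a mean Li-Yorke set with modulus $\eta$ for $\mathcal{F}$. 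Fixing any single such $x$ and putting $K:=K_x$ yields a Cantor set $K\subset X$ that is a mean Li-Yorke set with modulus $\eta$ for $\mathcal{F}$, which is exactly the assertion of the Corollary.

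The only points requiring a little care are: (i) checking that the stated conditions on $\mathcal{F}$ literally imply the defining conditions of a cba-group for the choice $S=\bigcup_n F_n$ — this is immediate from the definitions; and (ii) making sure the variational principle is being cited in the correct form, namely that positive topological entropy forces the existence of an \emph{ergodic} invariant measure of positive entropy, which is standard for countable amenable group actions. Neither step is a genuine obstacle; the Corollary is a direct specialization of Theorem \ref{thm1}. I do not anticipate any hard part — the entire content of the Corollary is already contained in the main theorem, and this deduction is purely formal.
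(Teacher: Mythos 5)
Your proposal is correct and matches the paper's intended argument: the paper derives this Corollary from Theorem \ref{thm1} precisely via the variational principle, and your observation that taking $S=\bigcup_{n=1}^\infty F_n$ makes $(G,\Phi,S,\mathcal{F})$ a cba-group is exactly the (unstated) formal step needed. The only minor point worth recording is that $S$ is infinite because $|F_n|\to\infty$ for any F{\o}lner sequence of an infinite group and the $F_n$ are nested, which you implicitly use.
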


\medskip

To demonstrate applications of Theorem \ref{thm1}, we give below two examples of F\o lner sequences for
some special cases.
We first consider the integer lattice group. It is clear that $\{F_n=[0,n-1]^d: n\in \mathbb N\}$ is a F{\o}lner sequence of $\mathbb{Z}^d$, where $d\in\mathbb{N}$.
Note that $\mathbb{Z}^d$ can be regarded as a countable discrete infinite bi-orderable amenable group with
 an algebraic past
(see e.g., \cite{AAHXY}).
Put $\mathbb{Z}_+^d=\{(n_1,\cdots, n_d)\in\mathbb{Z}^d : n_i\geq 0, 1\leq i \leq d\}$, and apply Theorem \ref{thm1},
we immediately get the following result.

\begin{Theorem}
Let $(X,\mathbb{Z}^d)$ be a $\mathbb{Z}^d$-system for some $d\in\mathbb{N}$, and $\mu$ be an ergodic $\mathbb{Z}^d$-invariant Borel probability measure on $X$ with positive entropy. Then there exists $\eta>0$ satisfying that for $\mu$-a.e. $x\in X$, there exists a Cantor set $K_x\subset\overline{W_{\mathbb{Z}_+^d}(x,\mathbb{Z}^d)}$ such that $K_x$ is a mean Li-Yorke set with modulus $\eta$ for $\{[0,n-1]^d: n\in \mathbb N\}$.
\end{Theorem}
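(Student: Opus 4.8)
The plan is to obtain this statement as a direct specialization of Theorem~\ref{thm1}. Concretely, I would fix a suitable algebraic past $\Phi$ on $\mathbb{Z}^d$ and check that $(\mathbb{Z}^d,\Phi,\mathbb{Z}_+^d,\{[0,n-1]^d:n\in\mathbb{N}\})$ satisfies the four defining conditions of a cba-group; once this is done, the asserted conclusion is exactly what Theorem~\ref{thm1} delivers for $G=\mathbb{Z}^d$, $S=\mathbb{Z}_+^d$, $\mathcal{F}=\{[0,n-1]^d\}$ and the given measure $\mu$.

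First I would choose positive real numbers $\alpha_1,\dots,\alpha_d$ that are linearly independent over $\mathbb{Q}$ and set $\phi(n_1,\dots,n_d)=\sum_{i=1}^d\alpha_i n_i$, a linear map $\mathbb{Z}^d\to\mathbb{R}$ which is injective on $\mathbb{Z}^d$. Then $\Phi:=\{g\in\mathbb{Z}^d:\phi(g)<0\}$ is an algebraic past making $\mathbb{Z}^d$ bi-orderable: additivity of $\phi$ gives $\Phi\cdot\Phi\subset\Phi$; injectivity of $\phi$ (so $\phi(g)\ne0$ for $g\ne0$) gives $\Phi\cap\Phi^{-1}=\emptyset$ and $\Phi\cup\Phi^{-1}\cup\{e_{\mathbb{Z}^d}\}=\mathbb{Z}^d$; and $g\Phi g^{-1}=\Phi$ is automatic since $\mathbb{Z}^d$ is abelian. (This is the algebraic past used for $\mathbb{Z}^d$ in \cite{AAHXY}.) Unwinding the definition, $g_1<_\Phi g_2$ if and only if $\phi(g_1)<\phi(g_2)$.

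Next I would verify the remaining conditions. The set $S=\mathbb{Z}_+^d$ is infinite, and, writing $c:=\min_{1\le i\le d}\alpha_i>0$, one has $\phi(s)\ge c\max_{1\le i\le d}s_i\ge0$ for every $s\in\mathbb{Z}_+^d$; hence for each $g\in\mathbb{Z}^d$,
\[
\{s\in\mathbb{Z}_+^d:s<_\Phi g\}=\{s\in\mathbb{Z}_+^d:\phi(s)<\phi(g)\}\subset\Bigl\{s\in\mathbb{Z}_+^d:\max_{1\le i\le d}s_i<\phi(g)/c\Bigr\},
\]
which is finite (indeed empty when $\phi(g)\le0$), so $\sharp\{s\in S:s<_\Phi g\}<\infty$. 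Finally, $\{F_n=[0,n-1]^d\}$ is a F\o lner sequence of $\mathbb{Z}^d$, we have $e_{\mathbb{Z}^d}=(0,\dots,0)\in F_1=\{(0,\dots,0)\}$, the sequence is increasing, and $\bigcup_{n=1}^\infty F_n=\mathbb{Z}_+^d=S\subset S$. Thus $(\mathbb{Z}^d,\Phi,\mathbb{Z}_+^d,\{[0,n-1]^d\})$ is a cba-group, and Theorem~\ref{thm1} applied to $(X,\mathbb{Z}^d)$ and $\mu$ produces the desired $\eta>0$ together with, for $\mu$-a.e.\ $x\in X$, a Cantor set $K_x\subset\overline{W_{\mathbb{Z}_+^d}(x,\mathbb{Z}^d)}$ that is a mean Li-Yorke set with modulus $\eta$ for $\{[0,n-1]^d\}$.

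The argument is therefore entirely a matter of verification, and the only point requiring genuine thought is the choice of $\Phi$: it must be arranged so that $\mathbb{Z}_+^d$ lies ``eventually above'' every group element, i.e.\ so that $\sharp\{s\in\mathbb{Z}_+^d:s<_\Phi g\}<\infty$. The plain lexicographic order fails this for $d\ge2$ (infinitely many $s\in\mathbb{Z}_+^d$ lie below $(1,0,\dots,0)$), which is exactly why one passes to the irrational-slope linear functional $\phi$ above (equivalently, a graded-lexicographic order, comparing $\sum_i n_i$ first); apart from this, no difficulty arises.
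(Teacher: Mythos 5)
Your proposal is correct and follows exactly the route the paper intends: the paper derives this theorem by citing Theorem~\ref{thm1} directly, after noting that $\{[0,n-1]^d\}$ is a F\o lner sequence and that $\mathbb{Z}^d$ admits an algebraic past (referring to \cite{AAHXY} for the latter). Your explicit construction of $\Phi$ via a rationally independent linear functional, and your observation that plain lexicographic order would fail the finiteness condition $\sharp\{s\in\mathbb{Z}_+^d:s<_\Phi g\}<\infty$ for $d\ge 2$, correctly fill in the verification that the paper leaves implicit.
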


We remark that even for $d=1$, when the entropy of the system $(X,\mathbb{Z})$ is positive,
 we do not know whether there exist mean Li-Yorke pairs for the F\o lner sequence $\{F_n=[-n,n] : n\in \mathbb N\}$ in the system.
For related topics see \cite{OW,DL}. One may ask whether mean Li-Yorke chaos only depends on the F\o lner sequence assuming that all F\o lner sets
sit in the future. We have the following proposition which will be proved in the next section.

\begin{Proposition}\label{fnots}
Let $G$ be a countable discrete infinite bi-orderable amenable group with the algebraic past $\Phi$. Then the following are equivalent:
\begin{itemize}
\item[{\rm 1)}] there exists $g\in G$ such that $g\Phi^{-1}\cap\Phi$ is infinite;
\item[{\rm 2)}] there exist $g\in G$ and a F\o lner sequence $\mathcal{F}=\{F_n\}_{n=1}^\infty$ of $G$ with
$e_G\in F_1\subset F_2\subset\cdots\subset\bigcup\limits_{n=1}^\infty F_n\subset \Phi^{-1}\cup\{e_G\}$, such that
 $\sharp\{s\in \bigcup\limits_{n=1}^\infty F_n :s<_\Phi g\}=\infty$.
In particular, for such F\o lner sequence $\mathcal{F}$ of $G$, there does not exist an infinite subset $S$ of $G$
 satisfying that $(G, \Phi, S, \mathcal{F})$ is a cba-group.
\end{itemize}
\end{Proposition}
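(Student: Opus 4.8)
The plan is to establish the equivalence of 1) and 2), and then derive the final ``in particular'' assertion as an immediate consequence. The implication from 2) to 1) should be the easier direction: suppose we have $g\in G$ and a F\o lner sequence $\mathcal{F}$ with $\bigcup_n F_n\subset \Phi^{-1}\cup\{e_G\}$ and $\sharp\{s\in\bigcup_n F_n: s<_\Phi g\}=\infty$. By definition $s<_\Phi g$ means $g^{-1}s\in\Phi$ (equivalently $sg^{-1}\in\Phi$); writing such an $s$ as $s\in\Phi^{-1}\cup\{e_G\}$, i.e. $s^{-1}\in\Phi\cup\{e_G\}$, I would show that the set of these $s$ witnesses that $g\Phi^{-1}\cap\Phi$ (or an appropriate translate, after a harmless reindexing using bi-invariance of the order) is infinite. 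Concretely, the condition $g^{-1}s\in\Phi$ with $s^{-1}\in\Phi$ says $s\in g\Phi$ and $s\in\Phi^{-1}$, so $\Phi^{-1}\cap g\Phi$ is infinite; taking inverses and using $g\Phi g^{-1}\subset\Phi$ (bi-orderability) this can be massaged into the stated form $g'\Phi^{-1}\cap\Phi$ infinite for a suitable $g'$. I would be careful here to track exactly which of $g,g^{-1}$ appears, but this is bookkeeping with the definitions of $<_\Phi$ and $\Phi$.

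The substantive direction is 1) $\Rightarrow$ 2). Assume $g\Phi^{-1}\cap\Phi$ is infinite for some $g\in G$. The goal is to \emph{build} a F\o lner sequence living in $\Phi^{-1}\cup\{e_G\}$ that nonetheless contains infinitely many elements $s$ with $s<_\Phi g$. Here I would first invoke amenability of $G$ to fix some F\o lner sequence $\{E_n\}$ for $G$; then I would \emph{translate and truncate} it so that it sits in the ``past'' $\Phi^{-1}\cup\{e_G\}$. The key tool is that for a bi-orderable amenable group the algebraic past $\Phi$ is itself ``F\o lner-cofinal'' in the sense used to prove Proposition \ref{fexist}: one can choose an increasing sequence of finite sets $F_n$ with $e_G\in F_n\subset\Phi^{-1}\cup\{e_G\}$ that is F\o lner. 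Indeed, the construction behind Proposition \ref{fexist} already produces F\o lner sets inside the past; here the only new requirement is to also \emph{force} the inclusion of elements below $g$. Since $g\Phi^{-1}\cap\Phi$ is infinite, the set $A:=\{s\in\Phi^{-1}: g^{-1}s\in\Phi\}$ — i.e. $s\in\Phi^{-1}$ with $s<_\Phi g$ — is infinite (again modulo matching up $g$ versus $g^{-1}$ and using $\Phi^{-1}\cap\Phi=\emptyset$, $g\Phi g^{-1}\subset\Phi$ to convert between $g\Phi^{-1}\cap\Phi$ and $A$). I would then enumerate $A=\{a_1,a_2,\dots\}$ and, at stage $n$ of the F\o lner construction, enlarge the F\o lner set $F_n$ by adjoining $\{a_1,\dots,a_n\}$ (and closing off under whatever finite modification keeps it inside $\Phi^{-1}\cup\{e_G\}$). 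Adding a fixed finite set to each term of a F\o lner sequence preserves the F\o lner property, so $\{F_n\cup\{a_1,\dots,a_n\}\}$ — after passing to a subsequence if needed so the bound $|\partial F_n|/|F_n|\to 0$ is not spoiled by the growing appendage — is still a F\o lner sequence, it is nested, it contains $e_G$, it lies in $\Phi^{-1}\cup\{e_G\}$, and its union contains all of $A$, hence infinitely many $s<_\Phi g$. That gives 2).

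The main obstacle, and the point deserving the most care, is the bookkeeping around the asymmetry of $<_\Phi$: the definition $g_1<_\Phi g_2 \iff g_2^{-1}g_1\in\Phi$ (and simultaneously $g_1g_2^{-1}\in\Phi$) must be reconciled with the set-theoretic statement $g\Phi^{-1}\cap\Phi\neq\emptyset$, and one must verify that ``infinite'' genuinely transfers between the two formulations rather than merely ``nonempty.'' This is where bi-invariance of the order (the condition $g\Phi g^{-1}\subset\Phi$) is essential — it is what lets one conjugate a one-sided condition into a two-sided one without losing infinitude. A secondary technical point is ensuring that when we adjoin $\{a_1,\dots,a_n\}$ to $F_n$ the F\o lner ratios still tend to zero; this is handled by the standard trick of slowing down the original sequence (choosing $F_n$ from far enough out in $\{E_k\}$ that $|F_n|$ dominates $n\cdot\max_{i\le n}|\,$neighborhood of $a_i|$). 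Finally, the ``in particular'' clause is then immediate: if some infinite $S$ made $(G,\Phi,S,\mathcal{F})$ a cba-group, then by definition $\bigcup_n F_n\subset S$ and $\sharp\{s\in S: s<_\Phi g\}<\infty$ for every $g$, contradicting $\sharp\{s\in\bigcup_n F_n: s<_\Phi g\}=\infty$.
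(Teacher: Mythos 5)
Your proposal is correct and follows essentially the same route as the paper: for 1)$\Rightarrow$2) you take a F\o lner sequence inside $\Phi^{-1}\cup\{e_G\}$ (as in the proof of Proposition~\ref{fexist}) and pad each term with finitely many elements below $g$, slowing the sequence down so the F\o lner ratios survive (the paper adjoins an enumeration of all of $\Phi^{-1}$, you adjoin only the relevant infinite set, which is an immaterial difference); for 2)$\Rightarrow$1) you unwind the definitions of $<_\Phi$ and use inversion plus conjugation-invariance of $\Phi$, exactly as the paper does, ending up with $g^{-1}$ as the witness in each direction. The ``$g$ versus $g^{-1}$'' bookkeeping you flag is indeed the only delicate point and resolves as you expect.
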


According to Proposition \ref{fnots}, if we suppose $G=\mathbb{Z}^2$ and
$$\Phi= \{(z_1,z_2)\in \mathbb{Z}^2: z_1+z_2<0\}\cup\{(z_1,z_2)\in \mathbb{Z}^2: z_1+z_2=0, z_1<0\},$$
then it is easy to check that the condition (1) of Proposition \ref{fnots} holds (e.g., by choosing $g=(-100,-100)\in\mathbb{Z}^2$),
thus, we know that there is a F\o lner sequence $\mathcal{F}=\{F_n\}_{n=1}^\infty$ of $G$ with
$e_G\in F_1\subset F_2\subset\cdots\subset\bigcup\limits_{n=1}^\infty F_n\subset \Phi^{-1}\cup\{e_G\}$
(i.e. all F\o lner sets sit in the future)
such that there does not exist an infinite subset $S$ of $G$ satisfying that $(G, \Phi, S, \mathcal{F})$ is a cba-group.

\medskip

Now we turn to our second example of the countable group of integer unipotent upper triangular matrices.
For given $d\in \mathbb{N}$,
let
$$
M({\bf a})=\left(
  \begin{array}{cccccccc}
    1 & a_{1,1} & a_{1,2} & a_{1,3}  &\ldots & a_{1,d-1} &a_{1,d} \\
    0 & 1       & a_{2,1} & a_{2,2}  &\ldots & a_{2,d-2} &a_{2,d-1}\\
    0 & 0       & 1       & a_{3,1}  &\ldots & a_{3,d-3} &a_{3,d-2}\\
   \vdots & \vdots & \vdots &  \vdots & \vdots & \vdots &\vdots\\
    0 & 0    &0       &0           & \ldots  & a_{d-1,1} &a_{d-1,2} \\
    0 & 0    &0       &0            & \ldots & 1 &a_{d,1} \\
    0 & 0    &0       &0            & \ldots & 0  & 1
  \end{array}
\right),
$$
 where ${\bf a}=(a_{i,k})_{1\le k\le d, 1\le i\le d-k+1}\in
\mathbb{Z}^{d(d+1)/2}$.
We note that
the indexation is nonstandard,
as the second index refers to the
off-diagonal, not the column.
 Let the group
\begin{equation}\label{udz}
U_{d+1}(\mathbb{Z})=\{ M({\bf a}):{\bf a}=(a_{i,k})_{1\le k\le d, 1\le i\le d-k+1}\in
\mathbb{Z}^{d(d+1)/2}\}.
\end{equation}
Then the group $U_{d+1}(\mathbb{Z})$ of integer
unipotent upper triangular matrices is in fact a $d$-step nilpotent
group.
Put $$\mathbf{S}=\{{\bf a}=(a_{i,k})_{1\le k\le d, 1\le i\le d-k+1}\in
\mathbb{Z}^{d(d+1)/2} : a_{d,1}^k \ge a_{i,k}\ge 0 \text{ for }1\le k\le d, 1\le i\le d-k+1\},$$ and
\begin{equation}\label{udzs}
S=\{ M({\bf a}) :  {\bf a}\in\mathbf{S} \}.
\end{equation}
For each $n\in \mathbb{N}$, take
\begin{equation}\label{udzfn}
F_n=\{ M({\bf a})\in S : a_{d,1} \le n \}.
\end{equation}
We will show in Lemma \ref{fphilners} that such $\{F_n\}_{n\ge 1}$ is a F\o lner sequence of $U_{d+1}(\mathbb{Z})$.
Then by Theorem \ref{thm1}, we obtain the following result.

\begin{Theorem}\label{thm2}
Define $U_{d+1}(\mathbb{Z})$, $S$ and $\{F_n\}_{n=1}^\infty$ as in \eqref{udz}, \eqref{udzs} and \eqref{udzfn} respectively.
If $(X,U_{d+1}(\mathbb{Z}))$  is a  $U_{d+1}(\mathbb{Z})$-system and $\mu$  is an ergodic $U_{d+1}(\mathbb{Z})$-invariant Borel probability measure on $X$ with positive entropy, then there exists  $\eta > 0$  satisfying that for $\mu$-a.e. $x\in X$, there exists a Cantor set $K_x\subset\overline{W_S(x,U_{d+1}(\mathbb{Z}))}$ such that $K_x$ is a mean Li-Yorke set with modulus $\eta$ for $\{F_n\}_{n=1}^\infty$.
\end{Theorem}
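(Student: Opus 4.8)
The plan is to deduce the theorem from Theorem \ref{thm1} by exhibiting an algebraic past $\Phi$ of $U_{d+1}(\mathbb{Z})$ for which $(U_{d+1}(\mathbb{Z}),\Phi,S,\mathcal{F})$ is a cba-group. The group $U_{d+1}(\mathbb{Z})$ is finitely generated, torsion-free and $d$-step nilpotent, hence countable, discrete, infinite, amenable and bi-orderable. The bi-order I would use is the lexicographic one on the coordinate vector $\mathbf{a}=(a_{i,k})$: list the coordinates by taking $a_{d,1}$ first, then the remaining coordinates of the first off-diagonal (in any fixed order), then all coordinates of the second off-diagonal, and so on up to the $d$-th off-diagonal, and declare that $M(\mathbf{a})$ comes before $M(\mathbf{b})$ when $\mathbf{a}$ is strictly smaller than $\mathbf{b}$ at the first coordinate of this list where they differ. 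I would then set $\Phi=\{M(\mathbf{a}):\mathbf{a}\neq\mathbf{0}\text{ and the first nonzero coordinate of }\mathbf{a}\text{ in this ordering is negative}\}$.

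The one point that needs care is bi-invariance of this order. Writing $M(\mathbf{a})M(\mathbf{b})=M(\mathbf{c})$, one has the multiplication formula
\[
c_{i,k}=a_{i,k}+b_{i,k}+\sum_{j=1}^{k-1}a_{i,j}\,b_{i+j,k-j},
\]
so the $k$-th off-diagonal of $\mathbf{c}$ depends only on the off-diagonals of $\mathbf{a}$ and of $\mathbf{b}$ of index at most $k$. Consequently, if $\mathbf{a}$ and $\mathbf{b}$ agree on all off-diagonals of index $<k$ and first differ on the $k$-th one, then for every fixed $M(\mathbf{c})$ the products $M(\mathbf{c})M(\mathbf{a})$ and $M(\mathbf{c})M(\mathbf{b})$ (and likewise $M(\mathbf{a})M(\mathbf{c})$ and $M(\mathbf{b})M(\mathbf{c})$) still agree on off-diagonals of index $<k$, while on the $k$-th off-diagonal their entries differ, entry by entry, by exactly $a_{i,k}-b_{i,k}$; hence multiplication by $M(\mathbf{c})$ on either side preserves the lexicographic comparison. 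Thus the order is bi-invariant, so by the characterization of bi-orderable groups recalled in the introduction $\Phi$ is a conjugation-invariant algebraic past, and the induced order $<_\Phi$ is the lexicographic order itself (by left-invariance, $M(\mathbf{a})<_\Phi M(\mathbf{b})$ iff $M(\mathbf{b})^{-1}M(\mathbf{a})$ precedes $e_G$ iff $M(\mathbf{a})$ precedes $M(\mathbf{b})$).

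It then remains to check the cba-group conditions on $S$ and $\mathcal{F}$. The set $S$ is infinite (it contains $M(\mathbf{a})$ for every $\mathbf{a}$ whose only nonzero coordinate is $a_{d,1}\in\mathbb{N}$); since $\mathbf{0}\in\mathbf{S}$ we have $e_G\in F_1$; the inclusions $F_1\subset F_2\subset\cdots$ are immediate from the definition of $F_n$, and $\bigcup_n F_n=S$; and $\mathcal{F}=\{F_n\}_{n=1}^\infty$ is a F{\o}lner sequence of $U_{d+1}(\mathbb{Z})$ by Lemma \ref{fphilners}. For the decisive condition, fix $g=M(\mathbf{b})\in U_{d+1}(\mathbb{Z})$ and suppose $M(\mathbf{a})\in S$ satisfies $M(\mathbf{a})<_\Phi g$. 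Since $a_{d,1}$ is the leading coordinate of the order and $a_{d,1}\ge 0$ by the definition of $\mathbf{S}$, this forces $0\le a_{d,1}\le b_{d,1}$, so $a_{d,1}$ takes only finitely many values; and once $a_{d,1}=m$ is fixed, the constraint $0\le a_{i,k}\le m^k$ defining $\mathbf{S}$ leaves only finitely many choices for the remaining coordinates. Hence $\sharp\{s\in S:s<_\Phi g\}<\infty$ for every $g\in U_{d+1}(\mathbb{Z})$, so $(U_{d+1}(\mathbb{Z}),\Phi,S,\mathcal{F})$ is a cba-group and Theorem \ref{thm1} yields the asserted $\eta>0$ together with, for $\mu$-a.e.\ $x\in X$, a Cantor set $K_x\subset\overline{W_S(x,U_{d+1}(\mathbb{Z}))}$ which is a mean Li-Yorke set with modulus $\eta$ for $\{F_n\}_{n=1}^\infty$.

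The main obstacle is choosing the bi-order correctly: $a_{d,1}$ must be the dominant coordinate so that the growth bound $a_{i,k}\le a_{d,1}^k$ in the definition of $S$ turns into the finiteness condition $\sharp\{s\in S:s<_\Phi g\}<\infty$, and one must then verify that this particular lexicographic order on the matrix entries is simultaneously left- and right-invariant, which is exactly where the nilpotent multiplication formula enters. The F{\o}lner property of $\{F_n\}_{n=1}^\infty$ itself is handled separately, in Lemma \ref{fphilners}.
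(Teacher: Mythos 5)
Your proposal is correct and follows essentially the same route as the paper: it introduces the lexicographic bi-order on $U_{d+1}(\mathbb{Z})$ with $a_{d,1}$ as the leading coordinate, verifies bi-invariance from the nilpotent multiplication formula \eqref{c-eq-1}, checks the cba-group conditions (with the F{\o}lner property coming from Lemma \ref{fphilners}), and applies Theorem \ref{thm1}. Your verification of bi-invariance and of the finiteness condition $\sharp\{s\in S: s<_\Phi g\}<\infty$ is in fact spelled out in more detail than in the paper, which only sketches these points.
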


\medskip

This paper is organized as follows. In Section 2,
we firstly review some basic dynamical properties for $G$-systems and also state some useful results,
then prove Propositions \ref{fexist} and \ref{fnots}.
In Section 3, we give the proof of Theorem \ref{thm1}. Finally, we prove Theorem \ref{thm2} in Section 4.

\medskip

\noindent{\bf Acknowledgement:  } We would like to thank Professor Xiangdong Ye for useful comments and helpful suggestions.
This paper has now been accepted by the journal {\it Ergodic Theory and Dynamical Systems}.
We also thank the anonymous referee for his/her helpful suggestions concerning this paper.

\medskip

\section{Preliminaries}
We firstly review some basic notions and fundamental properties.
Recall that a countable discrete group $G$ is called {\it amenable}
if there exists a sequence of finite subsets $F_n\subset G$,
such that $\lim\limits_{n\rightarrow +\infty} \frac{|gF_n\Delta F_n|}{|F_n|}=0$
holds for every $g \in G$, and we say that such $\{F_n\}_{n=1}^\infty$ is a {\it F{\o}lner sequence} of $G$.

Let $G$ be a countable discrete infinite amenable group with the unit $e_G$.
By a {\it $G$-system} $(X, G)$ we
mean that $X$ is a compact metric space endowed with the metric $\rho$, and
$\Gamma: G\times  X\rightarrow X, (g,x)\mapsto gx$ is a continuous mapping
satisfying $\Gamma (e_G, x)= x$, $\Gamma (g_1, \Gamma (g_2, x))= \Gamma (g_1 g_2, x)$
for each $g_1, g_2\in G$ and $x\in X$.
In the following, we fix a $G$-system $(X,G)$.

Denote by $\mathcal{B}_X$ the collection of all Borel subset of $X$
and $\mathcal{M}(X)$ the set of all Borel probability measures on $X$.
For $\mu\in \mathcal{M}(X)$, denote by $\mathcal{B}_X^\mu$ the completion of $\mathcal{B}_X$ under $\mu$.
The {\it support} of $\mu$ is defined to be the set
$$
\text{supp}(\mu) = \{x\in X: \mu (U)>0 \text{ for every open neighborhood } U \text{ of } x\} .
$$
It is clear that $\mu\big(\text{supp}(\mu)\big)=1$. Moreover, if $\mu(A)=1$ for $A\subset X$, then $\overline{A}\supset\text{supp}(\mu)$.
A sub-$\sigma$-algebra $\mathcal{A}$ of $\mathcal{B}_X^\mu$ is said
to be {\it $G$-invariant} if $g\mathcal{A}=\mathcal{A}$ for any $g\in G$.
$\mu\in \mathcal{M}(X)$ is called {\it $G$-invariant} if
$\mu = g \mu:=\mu\circ g^{-1}$ for each $g\in G$, and is
called {\it ergodic} if it is $G$-invariant and $\mu\big(\bigcup_{g\in G} g A\big)= 0$ or $1$ for any $A\in \mathcal{B}_X$.
Denote by $\mathcal{M} (X, G)$ the set of all $G$-invariant elements in
$\mathcal{M} (X)$. The set of all ergodic $G$-invariant elements in $\mathcal{M} (X)$ is denoted by $\mathcal{M}^e(X,G)$.
Note that the amenability of $G$ ensures that
 $\mathcal{M}^e (X, G)\ne \emptyset$ and both
$\mathcal{M}(X)$ and $\mathcal{M}(X, G)$ are convex compact metric
spaces under the weak$^*$-topology.

A {\it cover} of $X$ is a finite family of subsets of
$X$ whose union is $X$.
Given two covers  $\mathcal{U}$, $\mathcal{V}$ of $X$, $\mathcal{U}$
is said to be  {\it finer} than $\mathcal{V}$ (write $\mathcal{U} \succeq \mathcal{V}$) if each element of
$\mathcal{U}$ is contained in some element of $\mathcal{V}$.  Define
$\mathcal{U}$ $\vee$ $\mathcal{V}$ $= \{U \cap V : U \in \mathcal{U}, V \in \mathcal{V}\}$.
A  {\it partition} of $X$ is a cover of $X$ whose elements are pairwise disjoint.
Denote by $\mathcal{P}_X$ (resp. $\mathcal{P}_X^b$) the set of all partitions
(resp. finite Borel partitions) of $X$.
Let $\mathcal{P}_X^\mu=\{ \alpha\in \mathcal{P}_X: \text{each element in }\alpha \text{ belongs to }\mathcal{B}_X^\mu\}$.
The partition $\alpha=\bigvee_{i\in I}\alpha_i$ is called a {\it measurable
partition} if $\{ \alpha_i\}_{i\in I}$ is a countable family
of finite Borel partitions of $X$. The sets $A\in \mathcal{B}_X^\mu$ which are unions of
atoms of this $\alpha$ form a sub-$\sigma$-algebra of
$\mathcal{B}_X^\mu$ which is denoted by
$\widehat{\alpha}$ (or $\alpha$ if there is no ambiguity).
In fact, every sub-$\sigma$-algebra of
$\mathcal{B}_X^\mu$ coincides with a $\sigma$-algebra constructed in
 the way above (mod $\mu$).

For $f\in L^1(X,\mathcal{B}_X,\mu)$ and a sub-$\sigma$-algebra $\mathcal{A}$ of $\mathcal{B}_X^\mu$,
 denote by $\mathbb{E}_\mu (f | \mathcal{A})$ the conditional expectation (see e.g., \cite[Chapter 5]{AAEW}) of
$f$ with respect to $\mathcal{A}$.
The following result is well-known (see e.g., \cite[Theorem 14.26]{Gbook}, \cite[Chapter 5.2]{AAEW}).

\begin{Theorem} (Martingale Theorem) Let $\{\mathcal{A}_n\}_{n\ge 1}$ be a decreasing sequence (resp. an increasing sequence) of sub-$\sigma$-algebras of $\mathcal{B}_X^\mu$
and let $\mathcal{A}= \bigcap\limits_{n\ge 1} \mathcal{A}_n$ (resp.
$\mathcal{A}= \bigvee\limits_{n\ge 1} \mathcal{A}_n$). Then for every $f
\in L^1(X,\mathcal{B}_X,\mu)$,
$\mathbb{E}_\mu(f | \mathcal{A}_n)\rightarrow \mathbb{E}_\mu(f | \mathcal{A})$ as $n\rightarrow\infty$
in $L^1(\mu)$ and also $\mu$-almost everywhere.
\end{Theorem}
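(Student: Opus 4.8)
The plan is to handle the increasing case ($\mathcal{A}=\bigvee_{n}\mathcal{A}_n$) and the decreasing case ($\mathcal{A}=\bigcap_n\mathcal{A}_n$) in parallel, and in each to split the assertion into three tasks: (i) that $\mathbb{E}_\mu(f\mid\mathcal{A}_n)$ converges in $L^1(\mu)$; (ii) that the limit is $\mathbb{E}_\mu(f\mid\mathcal{A})$; and (iii) that the $L^1$ convergence can be upgraded to convergence $\mu$-almost everywhere. From the outset I would fix genuine $\mathcal{B}_X^\mu$-measurable representatives of all the functions involved, since the $\mathcal{A}_n$ live in the completion and every measurability statement below is meant only modulo $\mu$-null sets.

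For (i) and (ii) I would first treat $f\in L^2(X,\mathcal{B}_X,\mu)$, where $\mathbb{E}_\mu(\cdot\mid\mathcal{A}_n)$ is precisely the orthogonal projection of $L^2(\mu)$ onto the closed subspace $H_n:=L^2(X,\mathcal{A}_n,\mu)$. In the increasing case $H_1\subseteq H_2\subseteq\cdots$ and $\bigcup_n H_n$ is dense in $L^2(X,\mathcal{A},\mu)$ (simple functions over the algebra $\bigcup_n\mathcal{A}_n$ are dense), while in the decreasing case $H_1\supseteq H_2\supseteq\cdots$ with $\bigcap_n H_n=L^2(X,\mathcal{A},\mu)$; in both situations the orthogonal projections converge strongly to the projection onto the limit subspace, so $\mathbb{E}_\mu(f\mid\mathcal{A}_n)\to\mathbb{E}_\mu(f\mid\mathcal{A})$ in $L^2(\mu)$, hence in $L^1(\mu)$ since $\mu(X)=1$. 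For a general $f\in L^1(\mu)$ I would use that conditional expectation is an $L^1$-contraction: given $\epsilon>0$, choose $h\in L^2(\mu)$ with $\|f-h\|_1<\epsilon$; then $\|\mathbb{E}_\mu(f\mid\mathcal{A}_n)-\mathbb{E}_\mu(f\mid\mathcal{A}_m)\|_1\le 2\epsilon+\|\mathbb{E}_\mu(h\mid\mathcal{A}_n)-\mathbb{E}_\mu(h\mid\mathcal{A}_m)\|_1$, so $\{\mathbb{E}_\mu(f\mid\mathcal{A}_n)\}$ is Cauchy in $L^1(\mu)$ and has a limit $g$. To identify $g$ with $\mathbb{E}_\mu(f\mid\mathcal{A})$: since $L^1(X,\mathcal{A}_n,\mu)$ is closed in $L^1(\mu)$, in the decreasing case the tail $\{\mathbb{E}_\mu(f\mid\mathcal{A}_m)\}_{m\ge n}$ shows $g$ is $\mathcal{A}_n$-measurable for every $n$, hence $\mathcal{A}$-measurable, and in the increasing case $g\in L^1(X,\mathcal{A},\mu)$ for the same reason; and the tower property gives $\int_A\mathbb{E}_\mu(f\mid\mathcal{A}_n)\,d\mu=\int_A f\,d\mu$ for all large $n$ whenever $A$ belongs to the generating algebra $\bigcup_n\mathcal{A}_n$ (increasing case) or to $\mathcal{A}\subseteq\mathcal{A}_n$ (decreasing case), whence $\int_A g\,d\mu=\int_A f\,d\mu$ by $L^1$ convergence --- for all $A\in\mathcal{A}$ directly in the decreasing case, and after a monotone-class argument in the increasing case. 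This characterizes $\mathbb{E}_\mu(f\mid\mathcal{A})$.

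For (iii) I would invoke the genuine martingale machinery. The sequence $M_n:=\mathbb{E}_\mu(f\mid\mathcal{A}_n)$ is a martingale (increasing case) or a reverse martingale (decreasing case) adapted to $\{\mathcal{A}_n\}$, and in both cases $\sup_n\|M_n\|_1\le\|f\|_1<\infty$. Applying Doob's upcrossing inequality to the finite strings $M_1,\dots,M_n$ --- read in their natural order in the increasing case and in reverse order in the decreasing case --- bounds, uniformly in $n$, the expected number of upcrossings of any interval $[a,b]$ by a multiple of $(|a|+\|f\|_1)/(b-a)$; letting $n\to\infty$ gives that, for every pair of rationals $a<b$, the total number of upcrossings of $[a,b]$ by $\{M_n\}$ is finite $\mu$-a.e., so $\liminf_n M_n=\limsup_n M_n$ $\mu$-a.e. in $[-\infty,\infty]$, and Fatou's lemma ($\mathbb{E}_\mu[\liminf_n|M_n|]\le\|f\|_1$) makes this common value finite $\mu$-a.e. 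Thus $M_n$ converges $\mu$-a.e.; as it also converges to $g$ in $L^1(\mu)$, the two limits coincide $\mu$-a.e., and by (ii) the limit is $\mathbb{E}_\mu(f\mid\mathcal{A})$.

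The step I expect to be the main obstacle is (iii), and inside it the upcrossing estimate (equivalently, Doob's maximal inequality): the increasing case is the classical submartingale convergence theorem, but the decreasing case is a reverse martingale, so one must run the upcrossing and stopping-time arguments ``backwards'' along $\mathcal{A}_n\supseteq\mathcal{A}_{n-1}\supseteq\cdots\supseteq\mathcal{A}_1$ before sending $n\to\infty$. The only other delicate point is bookkeeping with the completion: the statements ``$g$ is $\mathcal{A}$-measurable'' and the identifications of $\bigvee_n\mathcal{A}_n$ and $\bigcap_n\mathcal{A}_n$ hold only modulo $\mu$-null sets, so fixing measurable representatives at the start is precisely what renders the almost-everywhere conclusion unambiguous.
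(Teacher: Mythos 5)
The paper does not prove this statement: it is quoted as a well-known result, with pointers to Glasner's \emph{Ergodic theory via joinings} (Theorem 14.26) and Einsiedler--Ward (Chapter 5.2), so there is no in-paper argument to compare yours against. Your outline is the standard textbook proof and is correct: the $L^2$ orthogonal-projection argument plus the $L^1$-contraction property gives $L^1$ convergence for general $f\in L^1$; the identification of the limit via closedness of $L^1(X,\mathcal{A}_n,\mu)$, the tower property, and a monotone-class step in the increasing case is sound (in the decreasing case, taking $\limsup_{m\ge n}\mathbb{E}_\mu(f\mid\mathcal{A}_m)$ as the representative makes the $\mathcal{A}$-measurability genuinely pointwise, which disposes of the completion bookkeeping you rightly worry about); and the a.e.\ convergence via Doob's upcrossing inequality, run forwards for the martingale and backwards along $\mathcal{A}_n\subseteq\mathcal{A}_{n-1}\subseteq\cdots\subseteq\mathcal{A}_1$ for the reverse martingale, together with Fatou to ensure finiteness of the limit, is exactly the classical argument. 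The only part left at the level of a sketch is the reverse upcrossing estimate itself, but you have identified it correctly and the uniform bound $(|a|+\|f\|_1)/(b-a)$ you state is the right one.
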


We begin to introduce the measure-theoretic entropy.
Given $\mu\in \mathcal{M}(X,G)$,  $\alpha\in \mathcal{P}_X^\mu$,
and a sub-$\sigma$-algebra $\mathcal{A}$ of $\mathcal{B}_X^\mu$,
 define
$$
H_{\mu}
(\alpha | \mathcal{A})= \sum_{A\in \alpha} \int_X
 -\mathbb{E}_\mu (1_A| \mathcal{A}) \log \mathbb{E}_\mu (1_A| \mathcal{A}) d \mu.
$$
One standard fact is that
$H_{\mu} (\alpha | \mathcal{A})$ increases with respect to $\alpha$
and decreases with respect to $\mathcal{A}$. Set $\mathcal{N}=
\{\emptyset, X\}$, and define
$
H_\mu (\alpha)= H_\mu (\alpha| \mathcal{N})
$.
Note that any $\beta \in \mathcal{P}_X^\mu$ naturally
generates a sub-$\sigma$-algebra $\mathcal{F} (\beta)$ of
$\mathcal{B}_X^\mu$, we then define
$
H_{\mu}(\alpha|\beta)= H_\mu (\alpha| \mathcal{F} (\beta))
$.
 The {\it measure-theoretic entropy} of $\mu$ relative to $\alpha$ is
defined by
\begin{equation*}
h_{\mu}(G,\alpha)=\lim\limits_{n\rightarrow+\infty}\frac{1}{|F_n|}
H_\mu(\bigvee_{g\in F_n}g^{-1}\alpha),
\end{equation*}
where $\{F_n\}_{n=1}^\infty$ is a F{\o}lner sequence in the group $G$.
It was shown in \cite[Theorem 6.1]{LW}  that the limit exists and
is independent of F{\o}lner sequences.
The {\it measure-theoretic entropy} of $\mu$ is defined by
$h_{\mu}(G)=h_{\mu}(G,X)=\sup_{\alpha\in \mathcal{P}^b_X}
h_{\mu}(G,\alpha)$.
The {\it conditional entropy} of $\alpha$ with respect to a $G$-invariant sub
$\sigma$-algebra $\mathcal{A}$ of  $\mathcal{B}_X^\mu$ is
 defined by
$$h_\mu(G,\alpha|\mathcal{A}) = \lim \limits_{n\rightarrow+\infty}\frac{1}{|F_n|}
H_\mu(\bigvee_{g\in F_n}g^{-1}\alpha|\mathcal{A}),$$  where $\{F_n\}_{n=1}^\infty$ is
 a F{\o}lner sequence of $G$.
By standard arguments,  one can similarly get from   \cite[Theorem 6.1]{LW}    that this limit exists and is also
 independent of F{\o}lner sequences.
The {\it conditional
entropy} of $\mu$ with respect to $\mathcal{A}$ is then defined by
$h_\mu(G|\mathcal{A})=\sup_{\alpha\in \mathcal{P}^b_X}
h_{\mu}(G,\alpha|\mathcal{A})$.

Next we turn to introducing conditional measures. For more details see for example \cite[Chapter 5]{AAEW}.
Let $\mathcal{F}$ be a sub-$\sigma$-algebra of
$\mathcal{B}_X^\mu$ and $\alpha$ be the measurable partition of
$X$ with $\widehat{\alpha}=\mathcal{F}$ (mod $\mu$). Then $\mu$ can be
disintegrated over $\mathcal{F}$ as
$$\mu=\int_X \mu_x d \mu(x),$$
where $\mu_x\in \mathcal{M}(X)$, $\mu_x(\alpha(x))=1$ for
$\mu$-a.e. $x\in X$, and if $\alpha(x)=\alpha(y)$ then $\mu_x=\mu_y$.
The disintegration is characterized by the
properties \eqref{meas1} and \eqref{meas3} below:
\begin{equation}\label{meas1}
\text{for every } f \in L^1(X,\mathcal{B}_X,\mu),  \text{  }  f \in L^1(X,\mathcal{B}_X,\mu_x)
\text{ for $\mu$-a.e. } x\in X,
\end{equation}
$$
 \text{  and the map   } x \mapsto \int_X  f(y) d\mu_x(y)  \text{  is in   }
L^1(X,\mathcal{F},\mu)  ;
$$
\begin{equation}\label{meas3}
\text{for every  } f\in L^1(X,\mathcal{B}_X,\mu),  \text{  }\text{  }  \mathbb{E}_{\mu}(f|\mathcal{F})(x)=\int_X
f d\mu_x  \text{  }  \text{   for $\mu$-a.e.  } x\in X.
\end{equation}

Let $\mathcal{F}$ be a sub-$\sigma$-algebra of
$\mathcal{B}_X^\mu$ and $\mu=\int_X \mu_x d \mu(x)$  be the
disintegration of $\mu$ over $\mathcal{F}$. The {\it conditional product}  of $\mu$ relative to
$\mathcal{F}$ is the  Borel probability measure
$\mu\times_\mathcal{F}\mu$ on $X\times X$ such that
\begin{equation*}
(\mu\times_\mathcal{F}\mu) (A\times B)=\int_X \mathbb{E}_\mu
(1_A|\mathcal{F})(x)\mathbb{E}_\mu
(1_B|\mathcal{F})(x)d\mu(x)=\int_X \mu_x(A)\mu_x(B)d\mu(x)
\end{equation*}
for all $A,B\in \mathcal{B}_X$.

\medskip

In the remaining part of this section, we prove Propositions \ref{fexist} and \ref{fnots} as stated in the introduction.

\begin{proof}[Proof of Proposition \ref{fexist}]
Since $G$ is a countable discrete infinite amenable group, there exists a F\o lner sequence $\{G_n\}_{n=1}^\infty$ of $G$.
We note that $|G_n|\rightarrow\infty$.

Write $\Phi^{-1}=\{g_n\}_{n=1}^\infty$. Since $G$ is bi-orderable, for each $n\in\mathbb{N}$,
we can choose $h_n\in G$ such that $\min_\Phi (G_nh_n)>_\Phi\max_\Phi\{g_i : 1\le i\le n\}$.
That is, for each element $\widetilde{g}\in G_nh_n$ and any $1\le i\le n$, we have $\widetilde{g}>_\Phi g_i$.

Put $H_n=G_nh_n\cup\{e_G\}$,
then $\{H_n\}_{n=1}^\infty$ is also a F\o lner sequence of $G$ since
$$\lim\limits_{n\rightarrow\infty}\frac{|gH_n\Delta H_n|}{|H_n|}=\lim\limits_{n\rightarrow\infty}\frac{|gG_n\Delta G_n|}{|G_n|}=0$$ for any $g\in G$.

Let $n_1=1$, and for every $i\ge2$, choose $n_i\in\mathbb{N}$ such that $|H_{n_i}|>2^i\sum_{j=1}^i|H_{n_{i-1}}|$.
Now for each $N\in\mathbb{N}$, take $F_N=\bigcup\limits_{i=1}^NH_{n_i}$, $S=\bigcup\limits_{N=1}^\infty F_N$, and $\mathcal{F}=\{F_n\}_{n=1}^\infty$.
Then we can check that $(G, \Phi, S, \mathcal{F})$ is a cba-group as follows.

Clearly, $S$ is infinite and $e_G\in F_1\subset F_2\subset\cdots\subset\bigcup\limits_{n=1}^\infty F_n= S$.
By the definition of $H_n$, we know that for each $g \in G$, $\{s\in S:s<_\Phi g\}$ is finite.
Finally, by the definition of $n_i$ and the fact that $\{H_n\}_{n=1}^\infty$ is a F\o lner sequence of $G$, we have
\begin{align*}
\lim\limits_{N\rightarrow\infty}\frac{|gF_N\Delta F_N|}{|F_N|}&\le
\lim\limits_{N\rightarrow\infty}\frac{|F_{N-1}|+|gF_{N-1}|+|gH_{n_N}\Delta H_{n_N}|}{|H_{n_N}|}\\
&\le\lim\limits_{N\rightarrow\infty}\frac{2\sum_{i=1}^{N-1}|H_{n_i}|+|gH_{n_N}\Delta H_{n_N}|}{|H_{n_N}|}\\
&=0
\end{align*}
for every $g\in G$, which implies that
$\{F_n\}_{n=1}^\infty$ is a F{\o}lner sequence of $G$.
\end{proof}

\medskip

\begin{proof}[Proof of Proposition \ref{fnots}]
(1)$\Rightarrow$(2): Suppose $g\in G$ is such that $|g\Phi^{-1}\cap\Phi|=\infty$.
Firstly, according to the proof of Proposition \ref{fexist}, we can choose a F\o lner sequence $\{E_n\}_{n=1}^\infty$ of $G$
with $|E_n|\rightarrow\infty$ such that
$e_G\in E_1\subset E_2\subset\cdots\subset\bigcup\limits_{n=1}^\infty E_n\subset \Phi^{-1}\cup\{e_G\}$.

Write $\Phi^{-1}=\{g_n\}_{n=1}^\infty$.
For each $n\in\mathbb{N}$, put $F_n=E_{i_n}\cup\{g_1, \cdots, g_n\}$, where $\{i_n\}_{n\ge1}\subset\mathbb{N}$ is such that
$1<i_1<i_2<\cdots$ and $|E_{i_n}|>n^2$.
Then $$e_G\in F_1\subset F_2\subset\cdots\subset\bigcup\limits_{n=1}^\infty F_n=\Phi^{-1}\cup\{e_G\}.$$
Since $$\lim\limits_{n\rightarrow\infty}\frac{|\widetilde{g}F_n\Delta F_n|}{|F_n|}
\le\lim\limits_{n\rightarrow\infty}\frac{|\widetilde{g}E_{i_n}\Delta E_{i_n}|+2n}{|F_n|}\le
\lim\limits_{n\rightarrow\infty}(\frac{|\widetilde{g}E_{i_n}\Delta E_{i_n}|}{|E_{i_n}|}+\frac{2}{n})=0$$
for every $\widetilde{g}\in G$,
$\{F_n\}_{n=1}^\infty$ is a F\o lner sequence of $G$.
Finally, we have $$\sharp\{s\in \bigcup\limits_{n=1}^\infty F_n :s<_\Phi g^{-1}\}=|g(\Phi^{-1}\cup\{e_G\})\cap \Phi|=\infty.$$
Thus, such $g^{-1}$ and $\{F_n\}_{n\ge1}$ are as required.

(2)$\Rightarrow$(1): Suppose $g\in G$, $\{F_n\}_{n=1}^\infty$ is a F\o lner sequence of $G$ with
$\bigcup\limits_{n=1}^\infty F_n\subset \Phi^{-1}\cup\{e_G\}$, and
 $\sharp\{s\in \bigcup\limits_{n=1}^\infty F_n :s<_\Phi g\}=\infty$.
Then $\sharp\{s\in \Phi^{-1}\cup\{e_G\} :s<_\Phi g\}=\infty$.
Thus,
$$|g^{-1}\Phi^{-1}\cap\Phi|
=| \Phi^{-1} \cap g\Phi |
=\sharp\{s\in \Phi^{-1} :s\in g\Phi \}
=\sharp\{s\in \Phi^{-1} :s<_\Phi g\}=\infty,$$
which shows that $g^{-1}$ is as required.
\end{proof}

\medskip

\section{Proof of Theorem \ref{thm1}}

To prepare for the proof of Theorem \ref{thm1},
we firstly recall some useful lemmas.
Throughout this section, we let $G$ be a countable discrete infinite bi-orderable amenable group with the algebraic past $\Phi$.
Let $\{g_i\}_{i\geq 1}$ be a sequence in $G$. We say that
 $\{g_i\}_{i\geq1}$ {\it increasingly goes to infinity with
respect to $\Phi$} (write $g_i\nearrow \infty$ w.r.t. $\Phi$) if
$g_i<_\Phi g_{i+1}$ for each $i\ge 1$ and for each element $g\in G$,
$\#\{ i\in \mathbb{N}: g_i<_\Phi g\}<+\infty$. Similarly, we say
that the sequence $\{g_i\}_{i\geq1}$ {\it decreasingly goes to infinity
with respect to $\Phi$} (write $g_i\searrow \infty$ w.r.t. $\Phi$) if
$g_i>_\Phi g_{i+1}$ for each $i\ge 1$ and  for each element $g\in
G$, $\#\{ i\in \mathbb{N}: g_i>_\Phi g\}<+\infty$.
We have the following version of Pinsker formula (see \cite{AAHXY} for details).

\begin{Lemma}\label{lm1} (Pinsker formula) Let $(X,G)$ be a $G$-system,
$\mu\in \mathcal{M}(X,G)$, and $\mathcal{A}$ be a $G$-invariant
sub-$\sigma$-algebra of $\mathcal{B}_X^\mu$.  If
$\alpha,\beta,\gamma\in \mathcal{P}_X^\mu$,  and
$f_n\nearrow \infty$ w.r.t.  $\Phi$ with $f_n\Phi  f_n^{-1}=\Phi$ for each $n\ge 1$,
then the following holds:
\begin{itemize}
\item[{\rm 1)}] $h_\mu(G,\alpha\vee\beta|\mathcal{A})=h_\mu(G,\beta|\mathcal{A})+H_\mu(\alpha|\beta_G\vee\alpha_\Phi\vee \mathcal{A})$,
where $\beta_G=\bigvee\limits_{g\in G}g\beta$ and
$\alpha_\Phi=\bigvee\limits_{g\in \Phi}g\alpha$;  in particular, $h_\mu(G,\alpha|\mathcal{A})=H_\mu(\alpha|\alpha_\Phi \vee \mathcal{A})$;
\item[{\rm 2)}]  if in addition, $\alpha\preceq\beta$,  then
$\lim \limits_{n\rightarrow\infty} H_\mu\big(\alpha|\beta_\Phi\vee (f_n^{-1}\gamma)_\Phi\vee \mathcal{A}\big)=H_\mu(\alpha|\beta_\Phi\vee \mathcal{A})$.
\end{itemize}
\end{Lemma}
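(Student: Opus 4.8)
My plan is to derive both parts from the chain rule for conditional entropy, the independence of the entropy limits from the F{\o}lner sequence (\cite{LW}), the Martingale Theorem, and the identity $g\Phi g^{-1}=\Phi$ for all $g\in G$, which holds automatically for bi-orderable $G$ (so the hypothesis $f_n\Phi f_n^{-1}=\Phi$ is no restriction). I would compute all entropies along a two-sided F{\o}lner sequence $\{F_n\}$, permissible by \cite{LW}, so that $h_\mu(G,\xi\,|\,\mathcal{A})=\lim_n|F_n|^{-1}H_\mu\big(\bigvee_{g\in F_n}g\xi\,|\,\mathcal{A}\big)$ for every partition $\xi$. For 1), since the two entropies on the left are such limits, their difference equals $\lim_n|F_n|^{-1}H_\mu\big(\bigvee_{g\in F_n}g\alpha\,\big|\,\bigvee_{g\in F_n}g\beta\vee\mathcal{A}\big)$ by the chain rule. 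I would enumerate $F_n=\{g_1<_\Phi\cdots<_\Phi g_{|F_n|}\}$, expand this conditional entropy along the enumeration, and translate the $i$-th summand by $g_i^{-1}$ (which preserves $\mu$ and $\mathcal{A}$) to rewrite it as $H_\mu\big(\alpha\,\big|\,\bigvee_{j<i}g_i^{-1}g_j\alpha\vee\bigvee_{k\le|F_n|}g_i^{-1}g_k\beta\vee\mathcal{A}\big)$. Since $g_j<_\Phi g_i$ forces $g_i^{-1}g_j\in\Phi$, the conditioning $\sigma$-algebra always lies inside $\beta_G\vee\alpha_\Phi\vee\mathcal{A}$, giving the lower bound $\ge H_\mu(\alpha\,|\,\beta_G\vee\alpha_\Phi\vee\mathcal{A})$; and for fixed finite $\Phi_0\subset\Phi$, $G_0\subset G$, whenever $g_i(\Phi_0\cup G_0)\subset F_n$ — which fails only for an $o(|F_n|)$-fraction of $i$, each contributing at most $\log|\alpha|$ — the conditioning $\sigma$-algebra contains $\bigvee_{g\in\Phi_0}g\alpha\vee\bigvee_{g\in G_0}g\beta$, yielding the upper bound $\le H_\mu\big(\alpha\,|\,\bigvee_{g\in\Phi_0}g\alpha\vee\bigvee_{g\in G_0}g\beta\vee\mathcal{A}\big)+o(1)$. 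Letting $\Phi_0\uparrow\Phi$, $G_0\uparrow G$ and invoking the Martingale Theorem makes the two bounds agree; the displayed ``in particular'' is the case $\beta=\{X\}$.

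For 2) I would write $I_\mu(\alpha;\mathcal{C}\,|\,\mathcal{D}):=H_\mu(\alpha\,|\,\mathcal{D})-H_\mu(\alpha\,|\,\mathcal{C}\vee\mathcal{D})\ge0$ and first reduce to $\alpha=\beta$: when $\alpha\preceq\beta$ one has $I_\mu(\beta;\mathcal{C}\,|\,\mathcal{D})-I_\mu(\alpha;\mathcal{C}\,|\,\mathcal{D})=I_\mu(\beta;\mathcal{C}\,|\,\alpha\vee\mathcal{D})\ge0$, so with $\mathcal{C}=(f_n^{-1}\gamma)_\Phi$ and $\mathcal{D}=\beta_\Phi\vee\mathcal{A}$ it suffices to prove $H_\mu\big(\beta\,|\,\beta_\Phi\vee(f_n^{-1}\gamma)_\Phi\vee\mathcal{A}\big)\to H_\mu(\beta\,|\,\beta_\Phi\vee\mathcal{A})=h_\mu(G,\beta\,|\,\mathcal{A})$ (last equality by 1)). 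Applying the ``in particular'' form of 1) to the partition $\beta\vee f_n^{-1}\gamma$, whose $\Phi$-past is $\beta_\Phi\vee(f_n^{-1}\gamma)_\Phi$, and using the chain rule gives
$$H_\mu\big(\beta\,|\,\beta_\Phi\vee(f_n^{-1}\gamma)_\Phi\vee\mathcal{A}\big)=h_\mu\big(G,\beta\vee f_n^{-1}\gamma\,|\,\mathcal{A}\big)-H_\mu\big(f_n^{-1}\gamma\,|\,\beta\vee\beta_\Phi\vee(f_n^{-1}\gamma)_\Phi\vee\mathcal{A}\big).$$
The first term on the right equals $h_\mu(G,\beta\vee\gamma\,|\,\mathcal{A})$ (replacing $\gamma$ by $f_n^{-1}\gamma$ changes each F{\o}lner average on $|f_nF_m\,\Delta\,F_m|=o(|F_m|)$ indices), which by 1) is $h_\mu(G,\beta\,|\,\mathcal{A})+H_\mu(\gamma\,|\,\beta_G\vee\gamma_\Phi\vee\mathcal{A})$ — independent of $n$. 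For the second term, translate by $f_n$: from $f_n\Phi f_n^{-1}=\Phi$ one gets $f_n(f_n^{-1}\gamma)_\Phi=\gamma_\Phi$ and $f_n(\beta\vee\beta_\Phi)=\bigvee_{g\le_\Phi f_n}g\beta$, so the second term equals $H_\mu\big(\gamma\,|\,\bigvee_{g\le_\Phi f_n}g\beta\vee\gamma_\Phi\vee\mathcal{A}\big)$; since $f_n\nearrow\infty$ w.r.t. $\Phi$, the sets $\{g:g\le_\Phi f_n\}$ increase to $G$, so by the Martingale Theorem this tends to $H_\mu(\gamma\,|\,\beta_G\vee\gamma_\Phi\vee\mathcal{A})$. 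Subtracting, the two copies of $H_\mu(\gamma\,|\,\beta_G\vee\gamma_\Phi\vee\mathcal{A})$ cancel and the limit is precisely $h_\mu(G,\beta\,|\,\mathcal{A})$, as wanted.

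The part of 1) requiring care is the (routine) F{\o}lner estimate on the indices $i$ with $g_i(\Phi_0\cup G_0)\not\subset F_n$. The real difficulty lies in 2): a priori the conditioning algebra $(f_n^{-1}\gamma)_\Phi$ moves with $n$ and is not monotone as a family of sub-$\sigma$-algebras, so the Martingale Theorem is not directly applicable to it; the key device is to push everything forward by $f_n$, which — using $f_n\Phi f_n^{-1}=\Phi$ — freezes $(f_n^{-1}\gamma)_\Phi$ back to the fixed $\gamma_\Phi$ and simultaneously turns $\beta\vee\beta_\Phi$ into the genuinely increasing family $\bigvee_{g\le_\Phi f_n}g\beta\uparrow\beta_G$. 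Getting these translation identities and the subsequent cancellation exactly right is, I expect, the main obstacle.
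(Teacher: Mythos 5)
Your proof is correct, but note that the paper itself offers no proof of this lemma to compare against: it is quoted verbatim with a pointer to \cite{AAHXY}, so you have supplied the argument the paper outsources. What you propose is the standard Rokhlin--Sinai-type derivation, and it is essentially the argument of the cited source: for 1), the F{\o}lner-averaged chain rule, enumeration of $F_n$ in $<_\Phi$-increasing order, and translation of each summand, squeezed between the lower bound $H_\mu(\alpha\,|\,\beta_G\vee\alpha_\Phi\vee\mathcal{A})$ and a martingale upper bound over finite $\Phi_0\subset\Phi$, $G_0\subset G$; for 2), the reduction to $\alpha=\beta$ via monotonicity of the conditional mutual information, followed by the push-forward by $f_n$, which uses $f_n\Phi f_n^{-1}=\Phi$ (automatic for the algebraic past of a bi-ordering, as you note) to freeze $(f_n^{-1}\gamma)_\Phi$ as $\gamma_\Phi$ while turning $\beta\vee\beta_\Phi$ into the increasing family $\bigvee_{g\le_\Phi f_n}g\beta\uparrow\beta_G$, so the Martingale Theorem applies and the two copies of $H_\mu(\gamma\,|\,\beta_G\vee\gamma_\Phi\vee\mathcal{A})$ cancel. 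The only bookkeeping to settle in a written version is what you already flagged: work with a two-sided F{\o}lner sequence (legitimate by the F{\o}lner-independence of the entropy limits), since both the bad-index count in 1) and the replacement of $\gamma$ by $f_n^{-1}\gamma$ in 2) require right-F{\o}lner estimates, and keep the paper's $g^{-1}\alpha$ convention consistent so that the relevant symmetric difference ($f_nF_m\Delta F_m$ or $F_mf_n^{-1}\Delta F_m$) is indeed $o(|F_m|)$.
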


Next we recall \cite[Lemma 2.1]{HLY} which will be used in the proof of our main result.

\begin{Lemma}\label{l021}
Let $X$ be a compact metric space, and $\mu\in\mathcal{M}(X)$.  If $\mathcal{F}_1$ and $\mathcal{F}_2$ are two sub-$\sigma$-algebras of $\mathcal{B}_X^\mu$ with $\mathcal{F}_2\subset\mathcal{F}_1$,  and  $\mu=\int_X\mu_x^i  d\mu(x)$  is the disintegration of $\mu$ over $\mathcal{F}_i$ for $i=1,2$,  then $\text{supp}(\mu_x^1)\subset\text{supp}(\mu_x^2)$  for $\mu$-a.e.  $x\in X$.
\end{Lemma}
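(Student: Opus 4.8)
The plan is to exploit the fact that $X$, being compact metric, is second countable: fix a countable base $\{U_k\}_{k\ge1}$ for its topology. Recall that for any Borel probability measure $\nu$ on $X$ one has $X\setminus\text{supp}(\nu)=\bigcup\{U_k:\nu(U_k)=0\}$, so the desired inclusion $\text{supp}(\mu_x^1)\subset\text{supp}(\mu_x^2)$ is equivalent to $X\setminus\text{supp}(\mu_x^2)\subset X\setminus\text{supp}(\mu_x^1)$, and this will follow once we know that, for $\mu$-a.e.\ $x$ and every $k$, the implication $\mu_x^2(U_k)=0\Rightarrow\mu_x^1(U_k)=0$ holds. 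Since there are only countably many $U_k$, it suffices to fix one basic open set $U=U_k$, prove the implication for $\mu$-a.e.\ $x$, and then intersect the resulting countably many full-measure sets.

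So fix $U$. By the characterization \eqref{meas3} of the disintegrations we have $\mu_x^i(U)=\mathbb{E}_\mu(1_U|\mathcal{F}_i)(x)$ for $\mu$-a.e.\ $x$, for $i=1,2$. Put $A=\{x\in X:\mathbb{E}_\mu(1_U|\mathcal{F}_2)(x)=0\}$; then $A\in\mathcal{F}_2$ (mod $\mu$), and since $\mathcal{F}_2\subset\mathcal{F}_1$ also $A\in\mathcal{F}_1$ (mod $\mu$). Using the defining property of conditional expectation twice,
\[
\int_A\mathbb{E}_\mu(1_U|\mathcal{F}_1)\,d\mu=\int_A 1_U\,d\mu=\int_A\mathbb{E}_\mu(1_U|\mathcal{F}_2)\,d\mu=0 ,
\]
the last equality because $\mathbb{E}_\mu(1_U|\mathcal{F}_2)\equiv0$ on $A$. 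As $\mathbb{E}_\mu(1_U|\mathcal{F}_1)\ge0$ $\mu$-a.e., this forces $\mathbb{E}_\mu(1_U|\mathcal{F}_1)=0$ for $\mu$-a.e.\ $x\in A$; in other words, $\mu_x^1(U)=0$ for $\mu$-a.e.\ $x$ with $\mu_x^2(U)=0$, which is exactly the implication we wanted for this $U$.

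Finally, let $N$ be the union, over $k\ge1$ and $i\in\{1,2\}$, of the $\mu$-null sets on which either the identity $\mu_x^i(U_k)=\mathbb{E}_\mu(1_{U_k}|\mathcal{F}_i)(x)$ fails or the above implication (for $U=U_k$) fails; then $\mu(N)=0$, and for every $x\notin N$ and every $k$ with $\mu_x^2(U_k)=0$ we have $\mu_x^1(U_k)=0$. Hence $X\setminus\text{supp}(\mu_x^2)\subset X\setminus\text{supp}(\mu_x^1)$, i.e.\ $\text{supp}(\mu_x^1)\subset\text{supp}(\mu_x^2)$, for all $x\notin N$. The only point needing care is this bookkeeping of the ``mod $\mu$'' exceptional sets — each conditional-expectation statement holds only off a $\mu$-null set — but since everything is indexed by the countable base $\{U_k\}$, the union of all these null sets is still null, so the conclusion holds simultaneously for all $k$ off a single null set. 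I expect this to be the only mildly delicate part; the rest is the routine manipulation of conditional expectations together with \eqref{meas3}.
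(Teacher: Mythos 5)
Your proof is correct. The paper itself does not prove this lemma --- it simply quotes it as \cite[Lemma 2.1]{HLY} --- so there is no in-paper argument to compare against; your argument (reduce to a countable base via the characterization of the support, then use \eqref{meas3} together with the tower property $\int_A\mathbb{E}_\mu(1_U|\mathcal{F}_1)\,d\mu=\int_A 1_U\,d\mu=\int_A\mathbb{E}_\mu(1_U|\mathcal{F}_2)\,d\mu$ for $A\in\mathcal{F}_2\subset\mathcal{F}_1$, and finally collect the countably many null sets) is the standard one and is complete.
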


We also need the following result due to Mycielski  (see e.g., \cite[Theorem 5.10]{Ak}).

\begin{Lemma}\label{l023}  (Mycielski's Lemma)
Let $Y$ be a perfect compact metric space and $C$ be a symmetric dense $G_\delta$ subset of  $Y\times Y$. Then there exists a dense subset $K\subset Y$ which is a union of countably many Cantor sets such that $K\times K \subset C\cup  \Delta_Y$, where $\Delta_Y=\{(y,y):y\in Y\}$.
\end{Lemma}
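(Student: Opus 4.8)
The plan is to realize $K$ as a dense union of Cantor sets produced by a Cantor scheme, using the Baire–category method: perfectness of $Y$ is what lets us keep branching, while density of the open sets making up $C$ is what lets us push off-diagonal products into $C$. First I would put the hypothesis in workable form. Since $C$ is a $G_\delta$ subset of $Y\times Y$, write $C=\bigcap_{m\ge1}U_m$ with each $U_m$ open; replacing $U_m$ by $\bigcap_{k\le m}\big(U_k\cap\sigma(U_k)\big)$, where $\sigma(a,b)=(b,a)$, I may assume that the $U_m$ are symmetric, open, decreasing, and \emph{dense} in $Y\times Y$ (density of $C$ forces each $U_m$ to be dense). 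The goal becomes a set $K$ with $K\times K\setminus\Delta_Y\subset\bigcap_m U_m$.

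Next I would carry out the scheme. Fix a countable base $\{B_j\}_{j\ge1}$ of nonempty open subsets of $Y$, and construct nonempty open sets $V_{j,s}$ indexed by $j\ge1$ and finite binary strings $s$, maintaining three invariants: $\overline{V_{j,s0}},\overline{V_{j,s1}}\subset V_{j,s}$ with $\overline{V_{j,s0}}\cap\overline{V_{j,s1}}=\emptyset$ (branching); $V_{j,\emptyset}\subset B_j$ and $\mathrm{diam}(V_{j,s})<2^{-|s|}$ (shrinking); and the crucial product condition that for any two distinct indices $(j,s)\ne(j',t)$ with $|s|=|t|=n$ one has $\overline{V_{j,s}}\times\overline{V_{j',t}}\subset U_m$ for every $m\le n$. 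Perfectness of $Y$ supplies the branching step, since every nonempty open set contains two disjoint smaller open sets; the product condition is then achievable one level at a time, because at level $n$ only finitely many nodes are active and only finitely many product constraints must be met, and since each $U_m$ is open and dense in $Y\times Y$ I can successively shrink the finitely many open sets so that each relevant closed product lands inside the appropriate $U_m$.

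Passing to the limit, for each $j$ and each infinite branch $\xi\in\{0,1\}^{\mathbb N}$ the nested shrinking closures single out one point, and the branching together with the vanishing diameters makes the resulting assignment a homeomorphism of $\{0,1\}^{\mathbb N}$ onto a Cantor set $K_j\subset\overline{B_j}$. Put $K=\bigcup_{j\ge1}K_j$. Then $K$ meets every $B_j$, hence is dense, and it is a union of countably many Cantor sets. Finally, given distinct $x,y\in K$, since the diameters of the level-$n$ sets tend to $0$ there is a level $n$ at which $x$ and $y$ lie in two \emph{different} level-$n$ open sets $V_{j,s}$ and $V_{j',t}$; choosing $n\ge m$ and invoking the product condition gives $(x,y)\in\overline{V_{j,s}}\times\overline{V_{j',t}}\subset U_m$, for every $m$, so $(x,y)\in\bigcap_m U_m\subset C$. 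Hence $K\times K\subset C\cup\Delta_Y$, as required; symmetry of $C$ is used only to keep $C\cup\Delta_Y$ symmetric, as the scheme already controls both ordered pairs.

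The main obstacle is maintaining the product condition \emph{globally}, across all pairs of distinct nodes, including nodes belonging to different Cantor sets $K_j$ and $K_{j'}$, while still branching at every stage. The device that resolves it is to treat the entire family $\{V_{j,s}\}$ as one scheme refined level by level: since only finitely many nodes and finitely many constraints are present at each stage and each $U_m$ is open and dense, a finite sequence of shrinkings meets all the current constraints simultaneously, after which perfectness leaves room to split. Some care is also needed to guarantee that distinct branches yield genuinely distinct points off the diagonal, which follows from the branching condition combined with the vanishing diameters.
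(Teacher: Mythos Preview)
The paper does not supply its own proof of this lemma; it quotes it as a known result of Mycielski with a reference to Akin's lecture notes. So there is no ``paper's proof'' to compare against, and your proposal stands on its own as a standard Kuratowski--Mycielski Cantor-scheme argument.

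Your outline is essentially correct, but one point needs to be made precise. You index the scheme by pairs $(j,s)$ with $j\ge1$ running over a countable base and $s$ a finite binary string, and you assert that ``at level $n$ only finitely many nodes are active.'' As written this is false: if every scheme $j$ is launched at the same stage (level $|s|=0$), then at level $n$ there are infinitely many nodes $(j,s)$ with $|s|=n$, and you cannot meet infinitely many product constraints by finitely many shrinkings. The standard fix is to \emph{stagger} the schemes: start the $j$-th Cantor scheme only at stage $j$, so that at stage $n$ only the finitely many indices $j\le n$ are present (with the $j$-th tree at depth $n-j$). With that bookkeeping your three invariants---branching via perfectness, shrinking diameters, and pushing all finitely many current off-diagonal closed products into the dense open $U_m$ for $m\le n$---can indeed be maintained inductively, and the rest of your argument (each $K_j$ a Cantor set in $\overline{B_j}$, density of $K=\bigcup_j K_j$, and $K\times K\setminus\Delta_Y\subset\bigcap_m U_m=C$) goes through.
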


\medskip

\begin{proof}[Proof of Theorem \ref{thm1}]
Denote by $\rho$ the metric on $X$.  Firstly,
 let $P_\mu(G)$ be the Pinsker $\sigma$-algebra of the system $(X,\mathcal{B}_X^\mu, G, \mu)$, i.e.,
$P_\mu(G)=\big\{ A\in \mathcal{B}_X^\mu:
h_\mu(G,\{A,X\setminus A\})=0 \big\}$.
It follows from Lemma \ref{lm1} that
$P_\mu(G)$ is a $G$-invariant
sub-$\sigma$-algebra of $\mathcal{B}_X^\mu$.

The proof will be divided into four steps.

\medskip

\noindent{\bf Step 1.  }
Since for any finitely many elements of $\Phi$ there exists an element of $\Phi$ smaller than all these elements (for instance their product),
one can easily select a sequence $h_n\searrow\infty$ in $\Phi$. Setting $f_n=h_n^{-1}$ we get a sequence $f_n\nearrow\infty$ in $\Phi^{-1}$.

\medskip

\noindent{\bf Step 2.  }
We will construct a particular measurable partition of the system $(X,\mathcal{B}_X^\mu, G, \mu)$ which plays an important role in our proof.
Let $\{\alpha_n\}_{n\geq 1}$ be an increasing sequence of finite Borel partitions of $X$ with $\text{diam}(\alpha_n)\rightarrow 0$ as $n\rightarrow \infty$.
Applying Lemma \ref{lm1} inductively, we can find a sequence $\{k_i\}_{i=1}^\infty$ such that $k_i\nearrow \infty$ and
for each $q\geq 2$,
\begin{align}\label{bound}
H_\mu\big(P_j|(P_{q-1})_\Phi\vee P_\mu(G)\big)-H_\mu\big(P_j|(P_q)_\Phi\vee
P_\mu(G)\big)<\frac{1}{j2^{q-j}},\ j=1,2,\cdots,q-1,
\end{align}
 where $P_j=\bigvee\limits_{i=1}^j f_{k_i}^{-1}\alpha_i$.
Then by the same argument as in the proof of Lemma 4.4 (1)--(2) in \cite{AAHXY}, it follows that the measurable partition
 $\mathcal{P}=\bigvee\limits_{j=1}^\infty P_j$ of $(X,\mathcal{B}_X^\mu, G, \mu)$ satisfies the following two properties:

\medskip

\noindent{\bf Claim 1.}  $\overline{\big(g\mathcal{P}_\Phi\big)(x)}\subset W_S(x,G)$ for any $x\in X$ and $g\in G$.

\medskip

\noindent{\bf Claim 2.}  $\bigcap\limits_{n=1}^\infty h_n\big(\widehat{\mathcal{P}_\Phi}\vee P_\mu(G)\big)=P_\mu(G)$.

\medskip

\noindent{\bf Step 3.  }
Let $\mu=\int_X\mu_x d\mu(x)$ be the disintegration of $\mu$ over $P_\mu(G)$.
We will show the following

\medskip

\noindent{\bf Claim 3.} There exists a set $X_1\in\mathcal{B}_X^\mu$ with $\mu(X_1)=1$, such that for any $x\in X_1$,
 $$\text{supp}(\mu_x)=\overline{\text{supp}(\mu_x)\cap W_S(x,G)}. $$

\medskip

For every $n\geq 1$, let $\mu=\int_X\mu_x^n d\mu(x)$ be the disintegration of $\mu$ over $\widehat{h_n\mathcal{P}_\Phi}\vee P_\mu(G)$.
Take a measurable partition $\xi$ such that $\widehat{\xi}=P_\mu(G)$, then we have
$\widehat{h_n\mathcal{P}_\Phi\vee\xi}=\widehat{h_n\mathcal{P}_\Phi}\vee P_\mu(G)$.
So for every $n\geq 1$, $\mu_x^n\big((h_n\mathcal{P}_\Phi\vee \xi)(x)\big)=1$,
and hence $\mu_x^n\big((h_n\mathcal{P}_\Phi)(x)\big)=1$.  Moreover,
\begin{equation}\label{wsxg1}
\mu^n_x\big(W_S(x,G)\big)=1
\end{equation}
by Claim 1.

Since $h_{n+1}<_\Phi h_n$, we have $h_n^{-1}h_{n+1}\in\Phi$, which implies $h_n^{-1}h_{n+1}\widehat{\mathcal{P}_\Phi}\subset\widehat{\mathcal{P}_\Phi}$ and thus
$\widehat{h_1\mathcal{P}_\Phi}\supset\widehat{h_2\mathcal{P}_\Phi}\supset\widehat{h_3\mathcal{P}_\Phi}\supset\cdots$. Therefore,
$$\widehat{h_1\mathcal{P}_\Phi}\vee P_\mu(G)\supset\widehat{h_2\mathcal{P}_\Phi}\vee P_\mu(G)\supset\cdots\supset\bigcap_{n=1}^\infty h_n\big(\widehat{\mathcal{P}_\Phi}\vee P_\mu(G)\big)=P_\mu(G)$$  by Claim 2.
Hence for $\mu$-a.e. $x\in X$,
\begin{equation}\label{supppp}
\text{supp}(\mu_x^n)\subset\text{supp}(\mu_x^{n+1})\subset\text{supp}(\mu_x)
\end{equation}
by Lemma \ref{l021}.

For $\mu$-a.e. $x\in X$ and any $f\in C(X)$, it follows from Martingale Theorem that
$$\int_X f d\mu_x^n = \mathbb{E}_{\mu}\big(f|\widehat{h_n\mathcal{P}_\Phi}\vee P_\mu(G)\big)(x)
\rightarrow \mathbb{E}_{\mu}\big(f| P_\mu(G)\big)(x) = \int_X f d\mu_x$$
as $n\rightarrow\infty$, which implies that
\begin{equation}\label{weakstar}
\lim_{n\rightarrow\infty} \mu_x^n =\mu_x
\end{equation}
under the weak$^*$-topology for $\mu$-a.e. $x\in X$ (see for example \cite[Corollary 5.21]{AAEW}).

Now for every $x\in X$, it is clear that $\overline{\text{supp}(\mu_x)\cap W_S(x,G)}\subset\text{supp}(\mu_x)$.
Also, by \eqref{weakstar}, \eqref{supppp} and \eqref{wsxg1}, we have for $\mu$-a.e. $x\in X$,
\begin{align*}
\mu_x\big(\overline{\text{supp}(\mu_x)\cap W_S(x,G)}\big)
&\geq\limsup_{n\rightarrow\infty}\mu_x^n\big(\overline{\text{supp}(\mu_x)\cap W_S(x,G)}\big) \\
&\geq\limsup_{n\rightarrow\infty}\mu_x^n\big(\text{supp}(\mu_x^n)\cap W_S(x,G)\big) \\&= 1
\end{align*}
which implies that $\text{supp}(\mu_x)\subset\overline{\text{supp}(\mu_x)\cap W_S(x,G)}$.
This proves Claim 3.

\medskip

\noindent{\bf Step 4.  }
We will do some preparation first and then complete the proof using Mycielski's Lemma.

\medskip

Let $\lambda=\mu \times_{P_\mu(G)}\mu$ and $\Delta_X=\{(x,x):x\in X\}$.
It is well known that  $\lambda\in \mathcal{M}^e(X\times X,G)$  and
$\lambda(\Delta_X)=0$
since  $\mu\in \mathcal{M}^e(X,G)$  and $h_\mu(G)>0$   (see e.g., \cite[Lemma 4.3]{AAHXY}).

\medskip

Clearly, $$X\times X\setminus\Delta_X=\bigcup_{k=1}^\infty\Big\{(x,y)\in X\times X: \rho(x,y)>\frac{1}{k}\Big\}. $$
 Thus there exists $\tau>0$ such that $\lambda(W)>0$, where $W=\{(x,y)\in X\times X: \rho(x,y)>\tau\}$.

Let $\eta=\tau\lambda(W)>0$. Put
$$P(X,G)=\bigcap_{m=1}^\infty\bigcap_{l=1}^\infty\bigcup_{n\geq l}\Big\{(x,y)\in X\times X : \frac{1}{|F_n|}\sum_{g\in F_n}\rho(gx,gy)<\frac{1}{m}\Big\}$$  and
$$D_\eta(X,G)=\bigcap_{m=1}^\infty\bigcap_{l=1}^\infty\bigcup_{n\geq l}\Big\{(x,y)\in X\times X : \frac{1}{|F_n|}\sum_{g\in F_n}\rho(gx,gy)>\eta-\frac{1}{m}\Big\}.$$
It is easy to check that $P(X,G)$ and $D_\eta(X,G)$ are $G_\delta$ subsets of $X\times X$.
Note that
\begin{equation}\label{mdp}
MLY_\eta(X,G)=P(X,G)\cap D_\eta(X,G)
\end{equation}
which is also a $G_\delta$ subset of $X\times X$.

\medskip

Since $\{F_n\}_{n=1}^\infty$ is a F\o lner sequence of $G$, we can find a tempered F\o lner subsequence
$\{\widehat{F_n}\}_{n=1}^\infty\subset \{F_n\}_{n=1}^\infty$ (see e.g., \cite[Proposition 1.5]{Lindenstrauss});
that is, such that there exists $b>0$ satisfying
$$\bigg|\bigcup_{k<n} \widehat{F_k}^{-1}\widehat{F_n}\bigg|\leq b\big|\widehat{F_n}\big| $$
 for any  $n\geq 1$.

Let $G_\lambda$ be the set of all generic points of $\lambda$ with respect to $\{\widehat{F_n}\}_{n=1}^\infty$; that is,
$(x,y)\in G_\lambda$ if and only if
$$\frac{1}{|\widehat{F_n}|}\sum_{g\in \widehat{F_n}}\delta_{g(x,y)}\rightarrow\lambda$$ under the weak$^*$-topology, where $\delta_{g(x,y)}$ is the point mass on $g(x,y)=(gx,gy)$.

Choose a countable dense subset $\{\phi_k\}_{k=1}^\infty\subset C(X\times X)$, and for each $k\geq 1$,  put
$$G_k=\bigg\{(x,y)\in X\times X : \lim\limits_{n\rightarrow\infty}\frac{1}{|\widehat{F_n}|}\sum_{g\in \widehat{F_n}}\phi_k\big(g(x,y)\big) = \int \phi_k d \lambda\bigg\}.$$
Then it is not hard to see that
$G_\lambda=\bigcap\limits_{k=1}^\infty G_k$.
By Lindenstrauss Pointwise Ergodic Theorem (see e.g., \cite[Theorem 1.2]{Lindenstrauss}),
 we have  $\lambda(G_k)=1$ for each $k\geq 1$.
Thus $\lambda(G_\lambda)=1$.

\medskip

For $(x,y)\in G_\lambda$, noting that $W$ is open, we have
\begin{align*}
\limsup_{n\rightarrow\infty} \frac{1}{|F_n|} \sum_{g\in F_n} \rho(gx,gy) \ge
\limsup_{n\rightarrow\infty} \frac{1}{|\widehat{F_n}|} \sum_{g\in \widehat{F_n}} \rho(gx,gy)
=\int \rho(x,y) d \lambda \ge \tau\lambda(W),
\end{align*}
because $\rho(\cdot,\cdot)\in C(X\times X)$.
This implies that $G_\lambda\subset D_\eta (X,G)$.

\medskip

Since $$\int (\mu_x \times \mu_x) (\Delta_X) d \mu (x) =\lambda(\Delta_X)=0, $$
we have
$(\mu_x\times\mu_x)(\Delta_X)=0$ for $\mu$-a.e. $x\in X$. Thus,
$\mu_x$ is non-atomic for $\mu$-a.e. $x\in X$.
This, together with $$\int_X(\mu_x\times\mu_x)(G_\lambda)d\mu(x)=\lambda(G_\lambda)=1,$$ implies that,
there exists $X_2\in\mathcal{B}_X^\mu$ with $\mu(X_2)=1$ such that $\mu_x$ is non-atomic and $$(\mu_x\times\mu_x)(G_\lambda)=1$$ for all $x\in X_2$.

\medskip

Let $X_0=X_1\bigcap X_2$. Then $\mu(X_0)=1$.
Now take $x\in X_0$. Then $\mu_x$ is non-atomic and hence
\begin{equation}\label{perfect}
\text{supp}(\mu_x) \text{ is a perfect closed subset of } X.
\end{equation}
Since $x\in X_2$, $$(\mu_x\times\mu_x)\big(G_\lambda\cap(\text{supp}(\mu_x)\times\text{supp}(\mu_x))\big)=1.$$ It follows that
$$G_\lambda\cap\big(\text{supp}(\mu_x)\times\text{supp}(\mu_x)\big) \text{ is dense in }\text{supp}(\mu_x)\times\text{supp}(\mu_x)$$ and hence
\begin{equation}\label{ddd}
D_\eta(X,G)\cap\big(\text{supp}(\mu_x)\times\text{supp}(\mu_x)\big) \text{ is a dense } G_\delta \text{ subset of } \text{supp}(\mu_x)\times\text{supp}(\mu_x)
\end{equation}
by the fact $G_\lambda\subset D_\eta(X,G)$.

\medskip

Since $x\in X_1$ and $W_S(x,G)\times W_S(x,G)\subset Asy_S(X,G)$, we have
\begin{align*}
&Asy_S(X,G)\cap\big(\text{supp}(\mu_x)\times\text{supp}(\mu_x)\big) \\
\supset&\big(W_S(x,G)\times W_S(x,G)\big)\cap\big(\text{supp}(\mu_x)\times\text{supp}(\mu_x)\big) \\
\supset&\big(W_S(x,G)\cap\text{supp}(\mu_x)\big)\times\big(W_S(x,G)\cap\text{supp}(\mu_x)\big)
\end{align*}
which implies that
\begin{align*}
&\overline{Asy_S(X,G)\cap\big(\text{supp}(\mu_x)\times\text{supp}(\mu_x)\big)} \\
\supset&\overline{W_S(x,G)\cap
\text{supp}(\mu_x)}\times\overline{W_S(x,G)\cap\text{supp}(\mu_x)} \\
=&\text{supp}(\mu_x)\times\text{supp}(\mu_x).
\end{align*}
Thus,
\begin{equation}\label{302}
Asy_S(X,G)\cap\big(\text{supp}(\mu_x)\times\text{supp}(\mu_x)\big) \text{  is dense in   }  \text{supp}(\mu_x)\times\text{supp}(\mu_x).
\end{equation}

\medskip

Since $\{F_n\}_{n=1}^\infty$ is a F{\o}lner sequence of the infinite group $G$,
 we have $|F_n|\rightarrow\infty$.
Hence $Asy_S(X,G)\subset P(X,G)$ by noting that $\bigcup\limits_{n=1}^\infty F_n\subset S$ and the fact that $|F_n|\rightarrow\infty$.

Combining this with \eqref{302}, we have
\begin{equation}\label{ppp}
P(X,G)\cap\big(\text{supp}(\mu_x)\times\text{supp}(\mu_x)\big) \text{ is a dense } G_\delta \text{ subset of }  \text{supp}(\mu_x)\times\text{supp}(\mu_x).
\end{equation}
Thus,
\begin{equation}\label{dengd}
MLY_\eta(X,G)\cap\big(\text{supp}(\mu_x)\times\text{supp}(\mu_x)\big) \text{ is a dense } G_\delta \text{ subset of } \text{supp}(\mu_x)\times\text{supp}(\mu_x)
\end{equation}
by \eqref{ppp}, \eqref{ddd} and \eqref{mdp}.

\medskip

Now by \eqref{perfect}, \eqref{dengd} and  Mycielski's Lemma (Lemma \ref{l023}), there exists a dense subset $K_x$ of $\text{supp}(\mu_x)$ which is a union of countably many Cantor sets such that
$$K_x\times K_x\subset MLY_\eta(X,G)\cup\Delta_X.$$
This completes the proof.
\end{proof}

\medskip

\section{Proof of Theorem \ref{thm2}}

Firstly, we prove the following lemma.
Recall that $U_{d+1}(\mathbb{Z})$  and $\{F_n\}_{n=1}^\infty$ are defined in \eqref{udz} and \eqref{udzfn}, respectively.

\begin{Lemma}\label{fphilners}
$\{F_n\}_{n\ge 1}$ defined in \eqref{udzfn} is a F\o lner sequence of $U_{d+1}(\mathbb{Z})$.
\end{Lemma}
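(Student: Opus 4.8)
The plan is to verify the Følner condition directly by estimating, for each fixed group element $M(\mathbf{b})\in U_{d+1}(\mathbb{Z})$, the symmetric difference $|M(\mathbf{b})F_n \,\Delta\, F_n|$ against $|F_n|$ as $n\to\infty$. First I would record the multiplication rule in these coordinates: the entry of $M(\mathbf{b})M(\mathbf{a})$ on the $k$-th off-diagonal in row $i$ is $a_{i,k}+b_{i,k}$ plus a polynomial in the entries $a_{j,l},b_{j',l'}$ with $l,l'<k$ (equivalently, lower off-diagonals), reflecting the $d$-step nilpotent structure. The key point is that the top off-diagonal parameter $a_{d,1}$ (the $(1,2)$-entry) is \emph{central}: multiplication by $M(\mathbf{b})$ simply shifts it by $b_{d,1}$, leaving all lower off-diagonals to interact among themselves. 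So $M(\mathbf{b})$ acts on the $a_{d,1}$-coordinate by a fixed translation, and on the remaining coordinates by an affine map whose linear part is unipotent (identity plus nilpotent) and whose translation/nonlinear part is bounded once $\mathbf{b}$ is fixed.

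Next I would compute $|F_n|$ asymptotically. Since $F_n=\{M(\mathbf{a})\in S: a_{d,1}\le n\}$, for each fixed value $a_{d,1}=m\le n$ the constraints $0\le a_{i,k}\le a_{d,1}^k=m^k$ force the $\mathbf{S}$-slice to have cardinality $\prod_{k=1}^{d}(m^k+1)^{N_k}$, where $N_k=d-k+1$ is the number of parameters on the $k$-th off-diagonal; this is a polynomial in $m$ of degree $D:=\sum_{k=1}^d k(d-k+1)$, so $|F_n|\asymp n^{D+1}$, and more precisely $|F_n|=c\,n^{D+1}(1+o(1))$ for an explicit constant $c$. The same estimate, with the same leading term, holds for $F_n$ with $n$ replaced by $n\pm C$ for any fixed constant $C$, so $|F_{n+C}|-|F_{n-C}| = O(n^{D})=o(|F_n|)$.

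The heart of the argument is then the inclusion showing $M(\mathbf{b})F_n$ differs from $F_n$ only near the boundary. Writing $\mathbf{b}$ fixed, there is a constant $C=C(\mathbf{b})$ so that: (i) if $M(\mathbf{a})\in F_n$ with $a_{d,1}$ in the ``bulk'' $[C, n-C]$ and all $a_{i,k}$ comfortably inside $[C\cdot(\text{something}),\ a_{d,1}^k-C\cdot(\text{something})]$, then $M(\mathbf{b})M(\mathbf{a})$ still lies in $S$ (the nonlinear corrections and the shift by $b_{d,1}$ are absorbed) and has top parameter $\le n$, hence lies in $F_n$; and conversely (ii) every element of $M(\mathbf{b})F_n$ not of this bulk form has $a_{d,1}$-coordinate within $C$ of $0$ or $n$, or has some $a_{i,k}$ within a polynomially-lower-order margin of its allowed range. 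In either case the number of exceptional elements is dominated by the measure of a boundary layer, which is $O(n^{D})=o(|F_n|)$: the $a_{d,1}$-layer near $0$ or $n$ contributes $O(n^{D})$, and a layer where $a_{i,k}$ is within $O(a_{d,1}^{k-1})$ (the size of the nilpotent correction) of an endpoint contributes a factor $a_{d,1}^{k-1}$ in place of $a_{d,1}^{k}$, again lowering the total degree by at least one. Summing these finitely many boundary-layer estimates gives $|M(\mathbf{b})F_n\,\Delta\,F_n| = O(n^{D}) = o(|F_n|)$, which is the Følner property.

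The main obstacle I anticipate is the bookkeeping in step three: because the off-diagonal entries interact through the nilpotent multiplication, the ``safe bulk'' region is not a product box but is cut out by inequalities like $a_{i,k}\ge c_k$ and $a_{i,k}\le a_{d,1}^k - c_k a_{d,1}^{k-1}$, and one must check both that left-multiplication by $M(\mathbf{b})$ maps this region into $F_n$ and that its complement inside $F_n$ (and the analogous complement inside $M(\mathbf{b})F_n$, obtained by applying $M(\mathbf{b})^{-1}=M(-\mathbf{b}')$) is of lower order. Making the constants $c_k$ (depending on $\mathbf{b}$ and the fixed nilpotent polynomials) explicit enough to see the degree drop, while not circular, is the one genuinely technical part; everything else is the polynomial volume count and the observation that $a_{d,1}$ is central.
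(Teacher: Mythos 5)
Your proposal is correct and follows essentially the same route as the paper: record the multiplication rule \eqref{c-eq-1}, count $|F_n|$ as a sum over the value $l$ of the top parameter $a_{d,1}$ of polynomial slices, and show that $gF_n\setminus F_n$ is confined to boundary layers (top parameter within $O(1)$ of $n$, or some $a_{i,k}$ within $O(a_{d,1}^{k-1})$ of an endpoint of $[0,a_{d,1}^k]$) each of which drops the degree by one, exactly the degree-drop mechanism the paper's three-case estimate implements. The only slips are harmless: your slice count $\prod_k(m^k+1)^{d-k+1}$ forgets that $a_{d,1}$ itself is fixed within the first off-diagonal (the paper inserts the factor $\tfrac{1}{l+1}$ for this), a uniform off-by-one in the degree that cancels in the ratio, and $a_{d,1}$ is the $(d,d+1)$-entry and is additive under multiplication because the first off-diagonal is the abelianization, not because it is central.
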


\begin{proof}
It is clear that for  any $A\in
U_{d+1}(\mathbb{Z})$ there exists a unique $${\bf a}=(a_{i,k})_{1\le
k\le d, 1\le i\le d-k+1}\in \mathbb{Z}^{d(d+1)/2}$$ such that
$A=M({\bf a})$.
Moreover, for given $${\bf
a}=(a_{i,k})_{1\le k\le d, 1\le i\le d-k+1}\in \mathbb{Z}^{d(d+1)/2},
  {\bf b}=(b_{i,k})_{1\le k\le d, 1\le i\le d-k+1}\in
\mathbb{Z}^{d(d+1)/2},$$   if ${\bf c}=(c_{i,k})_{1\le k\le
d, 1\le i\le d-k+1}\in \mathbb{Z}^{d(d+1)/2}$  is  such
that $$M({\bf c})=M({\bf a})M({\bf b}),$$  then
\begin{equation}\label{c-eq-1}
c_{i,k}=a_{i,k}+(\sum
\limits_{j=1}^{k-1}a_{i,k-j}b_{i+k-j,j})+b_{i,k}
\end{equation}
for $1\le k \le d$ and $1\le i \le d-k+1$.

\medskip

Fix $g=M({\bf a})\in U_{d+1}(\mathbb{Z})$.
 It suffices to show that
$$\lim_{n\rightarrow +\infty} \frac{|gF_n\Delta F_n|}{|F_n|}=0. $$

Let $D=\Big( \sum\limits_{k=1}^d k(d-k+1)\Big) -1$.
Then when $n\in\mathbb{N}$ is large enough, we have
\begin{align}\label{fenmu}
|F_n|&=\sum_{l=0}^n \bigg( \frac{1}{l+1} \prod_{k=1}^d (l^k+1)^{d-k+1} \bigg) \nonumber\ge   \sum_{l=1}^n  l^{\big( \sum\limits_{k=1}^d k(d-k+1)\big) -1} \nonumber\\
&=\sum_{l=1}^n l^D \ge\sum_{l=[\frac{n+1}{2}]}^n l^D \ge (\frac{n-1}{2})^{D+1} .
\end{align}
Note that to figure out $|F_n|$, we let $l$ represent the value of $b_{d,1}$, and should divide by $l+1$, because
the next product pretends that there are $l+1$ choices for $b_{d,1}$, while it is fixed; then we get the count
$\sum_{l=0}^n \big( \frac{1}{l+1} \prod_{k=1}^d (l^k+1)^{d-k+1} \big)$.

\medskip

Next we show that for this given $g=M({\bf a})$, there exists
$L=L(g)>0$ such that $$|gF_n\setminus F_n|\le Ln^D$$ when $n$ is large enough.

By \eqref{c-eq-1}, it is not hard to check that
\begin{align*}
 &\left| gF_n\setminus F_n \right|=\left| F_n\setminus g^{-1}F_n \right|\\
=&\sharp\left\{M({\bf b})\in U_{d+1}(\mathbb{Z}) :  M({\bf b})\in F_n , M({\bf a})M({\bf b})\notin F_n \right\} \\
=&\sharp\left\{{\bf b}\in \mathbf{S} :  b_{d,1} \le n, M({\bf a})M({\bf b})\notin F_n  \right\}\\
\leq & \sharp\left\{{\bf b}\in \mathbf{S} :  b_{d,1}\le n,  a_{d,1}+b_{d,1}>n \right\} \\
& + \sum_{P=1}^d\sum_{Q=1}^{d-P+1} \sharp\left\{{\bf b}\in \mathbf{S} :   b_{d,1}\le n,
(a_{d,1}+b_{d,1})^P < a_{Q,P}+(\sum\limits_{j=1}^{P-1}a_{Q,P-j}b_{Q+P-j,j})+b_{Q,P}  \right\} \\
& + \sum_{P=1}^d\sum_{Q=1}^{d-P+1} \sharp\left\{{\bf b}\in \mathbf{S} :  b_{d,1}\le n,
a_{Q,P}+(\sum\limits_{j=1}^{P-1}a_{Q,P-j}b_{Q+P-j,j})+b_{Q,P} < 0  \right\} .
\end{align*}

Let $R=\max \big\{ \big| a_{i,k} \big| : 1\le k\le d, 1\le i\le d-k+1 \big\}$, and $H=a_{d,1}$.
We now estimate the above three items respectively.
For the first item, noting that
$$ \frac{1}{l+1} \prod_{k=1}^d (l^k+1)^{d-k+1}
= (l+1)^{d-1} \prod_{k=2}^d (l^k+1)^{d-k+1} $$
which is increasing with respect to $l$,
we have
\begin{align*}
 & \sharp\left\{{\bf b}\in \mathbf{S} :   b_{d,1}\le n,  a_{d,1}+b_{d,1}>n \right\}
\leq  \sharp\left\{{\bf b}\in \mathbf{S} :   n-|H|+1 \le b_{d,1} \le n  \right\} \\
= & \sum_{l=n-|H|+1}^n \bigg( \frac{1}{l+1} \prod_{k=1}^d (l^k+1)^{d-k+1} \bigg)
\leq  |H| \bigg( \frac{1}{n+1} \prod_{k=1}^d (n^k+1)^{d-k+1} \bigg) \\
\leq & |H| \bigg( \frac{1}{n} \prod_{k=1}^d (2n^k)^{d-k+1} \bigg)
\leq  2^{d^2} |H| n^D .
\end{align*}

For the second item, fix $1\le P \le d$ and $1\le Q\le d-P+1$, we have
\begin{align*}
&   \sharp\left\{{\bf b}\in  \mathbf{S} :   b_{d,1}\le n, (a_{d,1}+b_{d,1})^P < a_{Q,P}+(\sum\limits_{j=1}^{P-1}a_{Q,P-j}b_{Q+P-j,j})+b_{Q,P} \right\} \\
\leq &   \sharp\left\{{\bf b}\in \mathbf{S} :  b_{d,1}\le n,  (H+b_{d,1})^P \le R+\sum\limits_{j=1}^{P-1}Rb_{d,1}^j+b_{Q,P}  \right\} \\
\leq &   \sharp\left\{{\bf b}\in \mathbf{S} :  b_{d,1}\le n,  (H+b_{d,1})^P \le R+R(P-1)b_{d,1}^{P-1}+b_{Q,P}  \right\} \\
\leq &   \sharp\left\{{\bf b}\in \mathbf{S} :  b_{d,1}\le n,  (H+b_{d,1})^P -  \big( R+Rdb_{d,1}^{P-1} \big) \le b_{Q,P} \le b_{d,1}^P   \right\} \\
\leq &   \sum_{l=0}^n  \bigg| l^P -(H+l)^P +  \big( R+Rdl^{P-1} \big) +1 \bigg| \bigg( \frac{1}{l^P+1} \bigg) \bigg( \frac{1}{l+1} \prod_{k=1}^d (l^k+1)^{d-k+1} \bigg) .
\end{align*}
Note that the factor
$\Big| l^P -(H+l)^P +  \big( R+Rdl^{P-1} \big) +1 \Big| \Big( \frac{1}{l^P+1} \Big)$
in the last inequality
results from changing the range of $b_{Q,P}$.
Since
$$ \bigg( \frac{1}{l^P+1} \bigg) \bigg( \frac{1}{l+1} \prod_{k=1}^d (l^k+1)^{d-k+1} \bigg)
= \Big( l^P+1 \Big)^{d-P} \Big(l+1\Big)^{d-1} \bigg( \prod_{2\le k\le d, k\neq P} (l^k+1)^{d-k+1} \bigg) $$
which increases with respect to $l$, and there exists $B=B({\bf a})>0$ such that
$$\bigg| n^P -(H+n)^P +  \big( R+Rdn^{P-1} \big) +1 \bigg|\le B n^{P-1}$$ when $n$ is large enough,
 we then have
\begin{align*}
&   \sharp\left\{{\bf b}\in  \mathbf{S} :   b_{d,1}\le n, (a_{d,1}+b_{d,1})^P < a_{Q,P}+(\sum\limits_{j=1}^{P-1}a_{Q,P-j}b_{Q+P-j,j})+b_{Q,P} \right\} \\
\leq & (n+1) \bigg| n^P -(H+n)^P + \big( R+Rdn^{P-1} \big) +1 \bigg| \bigg( \frac{1}{n^P+1} \bigg) \bigg( \frac{1}{n+1} \prod_{k=1}^d (n^k+1)^{d-k+1} \bigg)\\
\leq &  (n+1)  B n^{P-1}  \bigg( \frac{1}{n^P+1} \bigg) \bigg( \frac{1}{n+1} \prod_{k=1}^d (n^k+1)^{d-k+1} \bigg)\\
\leq &  B n^{P-1}  \bigg( \frac{1}{n^P} \bigg) \bigg(  \prod_{k=1}^d (2n^k)^{d-k+1} \bigg) \\
\leq &  2^{d^2} B n^D .
\end{align*}

For the third item, fix $1\le P \le d$ and $1\le Q\le d-P+1$. Similarly, when $n$ is large enough, we have
\begin{align*}
&   \sharp\left\{{\bf b}\in \mathbf{S} : b_{d,1}\le n,  a_{Q,P}+(\sum\limits_{j=1}^{P-1}a_{Q,P-j}b_{Q+P-j,j})+b_{Q,P} < 0  \right\} \\
\leq &   \sharp\left\{{\bf b}\in \mathbf{S} :  b_{d,1}\le n,  b_{Q,P} \le R+\sum\limits_{j=1}^{P-1}Rb_{d,1}^j  \right\} \\
\leq &   \sharp\left\{{\bf b}\in \mathbf{S} :  b_{d,1}\le n,  b_{Q,P} \le R+R(P-1)b_{d,1}^{P-1}  \right\} \\
\leq &   \sharp\left\{{\bf b}\in \mathbf{S} :  b_{d,1}\le n,   0 \le  b_{Q,P} \le R+Rdb_{d,1}^{P-1}  \right\} \\
= &   \sum_{l=0}^n  \bigg( R+Rdl^{P-1} +1 \bigg) \bigg( \frac{1}{l^P+1} \bigg) \bigg( \frac{1}{l+1} \prod_{k=1}^d (l^k+1)^{d-k+1} \bigg) \\
\leq &   (n+1)  \bigg( R+Rdn^{P-1} +1 \bigg) \bigg( \frac{1}{n^P+1} \bigg) \bigg( \frac{1}{n+1} \prod_{k=1}^d (n^k+1)^{d-k+1} \bigg)\\
\leq &  \bigg( (2Rd+1)n^{P-1}  \bigg) \bigg( \frac{1}{n^P+1} \bigg) \bigg(  \prod_{k=1}^d (n^k+1)^{d-k+1} \bigg)\\
\leq &  \bigg( (2Rd+1)n^{P-1}  \bigg) \bigg( \frac{1}{n^P} \bigg) \bigg(  \prod_{k=1}^d (2n^k)^{d-k+1} \bigg)\\
\leq &  2^{d^2} C n^D ,
\end{align*}
where $C=C({\bf a})=2Rd+1>0$.

Summarizing up, when $n\in \mathbb{N}$ is large enough, we have
\begin{align}\label{fenzi}
\big|gF_n\setminus F_n\big|
\le  \big( 2^{d^2} |H| n^D \big) + \sum_{P=1}^d\sum_{Q=1}^{d-P+1} \Big( 2^{d^2} B n^D + 2^{d^2} C n^D \Big)
= Ln^D  ,
\end{align}
where $L= 2^{d^2} |H|  + \sum\limits_{P=1}^d\sum\limits_{Q=1}^{d-P+1} \Big( 2^{d^2} B + 2^{d^2} C \Big)$ is a positive constant.

\medskip

Therefore, by \eqref{fenmu} and \eqref{fenzi}, we have
$$\frac{|gF_n\setminus F_n|}{|F_n|}\leq\frac{Ln^D}{ (\frac{n-1}{2})^{D+1} } \rightarrow 0$$
as $n\rightarrow\infty$. It follows that for every
$g \in U_{d+1}(\mathbb{Z})$,
\begin{equation*}
\frac{\big|gF_n\Delta F_n\big|}{\big|F_n\big|}
=\frac{\big|gF_n\setminus F_n\big|+\big|F_n\setminus gF_n\big|}{\big|F_n\big|}
=\frac{2 \big|gF_n\setminus F_n\big|}{\big|F_n\big|}
   \rightarrow 0
\end{equation*}
as $n\rightarrow\infty$,
which implies that such  $\big\{F_n\big\}_{n\ge 1}$ is a F\o lner sequence of $U_{d+1}(\mathbb{Z})$.
\end{proof}

\medskip

Finally, we prove Theorem \ref{thm2}. Recall that $S$ is defined in \eqref{udzs}.

\begin{proof} [Proof of Theorem \ref{thm2}]
Define the following linear order relation on
$U_{d+1}(\mathbb{Z})$:  $$M({\bf a})<M({\bf b})$$  if and only if
$${\bf a}=\big(a_{d,1},a_{d-1,1},\cdots,a_{1,1};a_{d-1,2},a_{d-2,2},\cdots,a_{1,2};a_{d-2,3},\cdots,a_{1,3};\cdots;a_{2,d-1},a_{1,d-1};a_{1,d}\big)$$
is lexicographically less than
$${\bf b}=\big(b_{d,1},b_{d-1,1},\cdots,b_{1,1};b_{d-1,2},b_{d-2,2},\cdots,b_{1,2};b_{d-2,3},\cdots,b_{1,3};\cdots;b_{2,d-1},b_{1,d-1};b_{1,d}\big).$$

This order relation is right-invariant (also left-invariant) with respect to the translations of $U_{d+1}(\mathbb{Z})$,
which is not hard to see from \eqref{c-eq-1};
when computing the term $c_{i,k}$ of the matrix $M({\bf a})M({\bf b})$ we only use these terms of
$M({\bf a})$ which appear to the left of $a_{i,k}$ in the above writing of ${\bf a}$ as a vector.

Thus,
we obtain an algebraic past $\Phi$ in
 $U_{d+1}(\mathbb{Z})$ defined as a subset of all elements of
$U_{d+1}(\mathbb{Z})$ which are less than the identity $I_{d+1}$;
 that is, $\Phi=\{M({\bf a})\in U_{d+1}(\mathbb{Z}): M({\bf a})<I_{d+1}\}$.
By Lemma \ref{fphilners}, we know that $\{F_n\}_{n\ge 1}$ is a F\o lner sequence of $U_{d+1}(\mathbb{Z})$.
Thus, $U_{d+1}(\mathbb{Z})$ is a countable discrete
infinite bi-orderable amenable group with the algebraic past $\Phi$.

It is clear that $S$ is infinite and
$I_{d+1}\in F_1\subset F_2\subset\cdots\subset\bigcup_{n=1}^\infty F_n\subset S$.
For each $g\in U_{d+1}(\mathbb{Z})$,
we can easily check that
$\sharp\big\{s\in S:s<_\Phi
g\big\}<\infty$.
Hence the result follows from Theorem \ref{thm1}.
\end{proof}

\end{document}